\numberwithin{equation}{section}
\DeclareMathOperator{\R}{\mathbb{R}} 
\DeclareMathOperator{\N}{\mathbb{N}} 
\newcommand{\p}{\mathbb{P}} 
\newcommand{\E}{\mathbb{E}} 
\newcommand{\eps}{\varepsilon} 
\def\cC{{\mathcal C}}
\def\cX{{\mathcal X}}
\newtheorem{theorem}{Theorem}[section]
\newtheorem{proposition}[theorem]{Proposition}
\newtheorem{fact}[theorem]{Fact}
\newtheorem{fact*}[]{Fact}
\newtheorem{corollary}[theorem]{Corollary}
\newtheorem{question}[theorem]{Question}
\newtheorem{definition}[theorem]{Definition}
\newtheorem{definition*}[]{Definition}
\begin{document}

\title{Intersections of random sets}
\author{
	Jacob Richey
	\and
	Amites Sarkar
}
\date{\today}

\maketitle


\begin{abstract}
We consider a variant of a classical coverage process, the boolean model in $\mathbb{R}^d$. Previous efforts have focused on convergence of the unoccupied region containing the origin to a well studied limit $C$. We study the intersection of sets centered at points of a Poisson point process confined to the unit ball. Using a coupling between the intersection model and the original boolean model, we show that the scaled intersection converges weakly to the same limit $C$. Along the way, we present some tools for studying statistics of a class of intersection models.
\end{abstract}


\section{Introduction} \label{sec:intro} 

The Boolean model is a geometric model for random sets in space: for example, comets striking the moon, or antibodies attaching to a virus. One starts with a Poisson point process $\mathcal{X}$ of intensity $\lambda$ on $\mathbb{R}^d$ (see Appendix A for some background on Poisson processes), and, for simplicity, some fixed set $S$. The Boolean model refers to the set

\begin{equation}
B_{\lambda, S} = \bigcup_{X \in \mathcal{X}} (X + S),
\end{equation}

\noindent where, for $x \in \mathbb{R}^d$,

\begin{equation}
x+S = \{x + y: y \in S\}.
\end{equation}

\noindent Conditioning on the (positive probability) event $\{0 \notin B_{\lambda, S}\}$, one then studies, among other things, the uncovered component of $\R^d$ containing $0$, known as the {\it Crofton cell}.

\medskip

The Boolean model was first defined and studied in the 1960s by E.N. Gilbert, who investigated both percolation~\cite{Gil61} and coverage~\cite{Gil65}. Gilbert's results (from both papers) were improved by Hall~\cite{Hal85a,Hal85b}, and Hall's result on coverage~\cite{Hal85a} was improved and generalized by Janson~\cite{Jan} and Molchanov~\cite{Mol}. The books by Meester and Roy~\cite{MeRo} and Hall~\cite{Hallbook} are standard references for the percolation and coverage processes respectively, and the more recent books by Haenggi~\cite{Martin} and Franceschetti and Meester~\cite{Massimo} cover applications to wireless networks (the motivating example from~\cite{Gil61}).

\medskip

There are a number of well-studied cousins of the Boolean model. One such class of models is tessellations. For example, consider a hyperplane tessellation of intensity $\lambda$ on $\R^d$. Namely, let $\mathcal{R} = \{R_n: n \in \mathbb{N}\}$ denote a Poisson point process with intensity $\lambda$ -- or, more generally, with intensity measure $\nu$ -- on $\mathbb{R},$ and let $\theta_n, n \in \N$, be iid uniform over the $d-1$ dimensional unit sphere, independent of $\mathcal{R}$. The corresponding Poisson field $\mathcal{H}$ refers to the set of hyperplanes $H_n$ given by

\begin{equation} \label{pht_def}
H_n = \{x \in \R^d: x \cdot \theta_n = R_n\},
\end{equation}

\noindent i.e., $H_n$ is the hyperplane passing through the point $R_n\theta_n$, with normal vector parallel to $\theta_n$. Note that, on average, $2 \lambda$ hyperplanes intersect the unit sphere in this model. The analogous object in this context is the connected component of the tessellation containing 0, denoted $D_0^\lambda$.

\medskip

Hyperplane tessellations have a history dating back to the 19th century. Crofton's formula (1868) expresses the length of a curve $C$ in terms of the expected number of times $C$ meets a random line, i.e., the expected number of times $C$ intersects a random line tessellation. Generalizations of Crofton's formula led to the development of Integral Geometry~\cite{San}. Meanwhile, inspired by a question of Niels Bohr on cloud chamber tracks, Goudsmit in 1945~\cite{Goud} computed the variance of the cell size in a random line tessellation, and Miles in 1964~\cite{Miles1, Miles2} initiated the systematic study of random line tessellations (with potential applications to papermaking in mind). We will make use of Goudsmit's result later. Calka's recent survey article~\cite{Calka} gives an excellent overview of the field.

\medskip

We will also use a slightly different tessellation model: tessellation by spheres, rather than hyperplanes. Let $\mathcal{X}$ be a Poisson point process of intensity $\lambda$ on $\mathbb{R}^d$, and let $\mathbb{S} = \partial \mathbb{B} = \{x \in \mathbb{R}^d: ||x||_2 = 1\}$ denote the boundary of the unit ball $\mathbb{B}$ in $\mathbb{R}^d$. Then the set

\begin{equation}
\bigcup_{X \in \mathcal{X}} (X + \mathbb{S})
\end{equation}

\noindent partitions $\mathbb{R}^d$ into a union of disjoint connected components. As before, it is common to study the connected component of this tessellation containing the origin, denoted $E_0^\lambda$.

A result due to Michel and Paroux~\cite{MP} (see also~\cite{CMP}) gives a common scaling limit for these sets:

\begin{theorem}\label{CMP} Let $C_0^\lambda$, $D_0^\lambda$ and $E_0^\lambda$ denote the Crofton cell in the boolean model (with $S=\mathbb{B}$), and the connected components containing the origin in the hyperplane and sphere tessellation models, respectively. These three models have a common scaling limit in distribution:

\begin{equation}
\lim_{\lambda \to \infty} S_d\lambda C_0^\lambda \sim \lim_{\lambda \to \infty} 2\lambda D_0^\lambda  \sim \lim_{\lambda \to \infty} 2S_d\lambda E_0^\lambda \sim 2C,
\end{equation}

\noindent where $S_d$ is the surface area measure of $\mathbb{S} = \partial \mathbb{B}$, $C$ is the law of the normalized Crofton cell $D_0^1$, and $\sim$ denotes equality in distribution. \end{theorem}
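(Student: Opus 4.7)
The plan is to show that, after appropriate rescaling, each of the three models produces a common Poisson hyperplane process near the origin, and then to promote this to weak convergence of the cells themselves. The hyperplane tessellation case is exact rather than asymptotic: by the scaling property of the one-dimensional Poisson process of signed distances $R$, the rescaled process $\lambda D_0^\lambda$ is identically distributed to $D_0^1 = C$ for every $\lambda > 0$, so $2\lambda D_0^\lambda \sim 2C$ with no limit required. The real content is in $C_0^\lambda$ and $E_0^\lambda$.

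The key geometric input for both is that a sphere $\partial B(X,1)$ with $|X|$ within $O(1/(S_d\lambda))$ of $1$, rescaled by $S_d\lambda$ and restricted to a fixed window around the origin, becomes a sphere of radius $S_d\lambda$ passing near the origin and is uniformly close to an affine hyperplane with error $O(1/\lambda)$. Parametrize contributing centers by $X = (1 + r/(S_d\lambda))\theta$ with $\theta \in \mathbb{S}$; the conditioning $0 \notin B_{\lambda,\mathbb{B}}$ restricts $r > 0$ for $C_0^\lambda$, while both signs of $r$ contribute for $E_0^\lambda$ (the cell containing the origin in the sphere tessellation is bounded by spheres with centers both just inside and just outside the unit sphere). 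A polar-coordinate Jacobian shows that the rescaled driving intensity is $\lambda(1 + r/(S_d\lambda))^{d-1}/(S_d\lambda)\,dr\,d\sigma(\theta)$, tending to $(1/S_d)\,dr\,d\sigma(\theta)$, and the Mapping Theorem for Poisson processes yields a limiting Poisson process of hyperplanes with this intensity on $(0,\infty) \times \mathbb{S}$ for $C_0^\lambda$ and on $\mathbb{R} \times \mathbb{S}$ for $E_0^\lambda$.

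Matching with $C$ requires tracking the two-to-one character of the standard hyperplane parametrization. The hyperplane process of $C = D_0^1$, rewritten in the one-to-one form on $(0,\infty) \times \mathbb{S}$, has intensity $(2/S_d)\,dR\,d\sigma(\theta)$ (the factor of $2$ accounting for the folding of the identification $(R,\theta) \sim (-R,-\theta)$). For $E_0^\lambda$, the sphere-parameter-to-hyperplane map $(r,\theta) \mapsto (|r|, \mathrm{sgn}(r)\theta)$ is two-to-one on $\mathbb{R}\times\mathbb{S}$, pushing the intensity $(1/S_d)$ forward to $(2/S_d)$ on $(0,\infty) \times \mathbb{S}$ and matching $C$; hence $S_d\lambda E_0^\lambda \to C$, i.e.\ $2 S_d\lambda E_0^\lambda \to 2C$. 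For $C_0^\lambda$ the same map is one-to-one on its domain $(0,\infty) \times \mathbb{S}$, producing hyperplane intensity $(1/S_d)$, half that of $C$; by the scaling invariance of Poisson hyperplane processes, the corresponding cell is distributed as $C$ rescaled by a factor of $2$, so $S_d\lambda C_0^\lambda \to 2C$.

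The main obstacle is the last step: upgrading weak convergence of the underlying Poisson processes of spheres or hyperplanes to weak convergence of the cells containing the origin. The ``cell containing origin'' functional is deterministic but only locally continuous in its boundary data: distant boundary pieces can in principle alter the cell, and configurations with tangencies or other degeneracies are discontinuity points. The standard resolution uses three ingredients: (i) a tail bound showing that, with probability tending to $1$, the rescaled cell is contained in a fixed ball $B(0,M)$, so only finitely many boundary objects (those meeting $B(0,2M)$) matter; (ii) continuity of the cell-containing-origin map in the Hausdorff metric on finite non-degenerate configurations; and (iii) a portmanteau-style argument combining (i) and (ii) with the process-level convergence established above.
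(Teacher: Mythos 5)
This theorem is not proved in the paper at all: it is quoted from Michel and Paroux \cite{MP} (see also \cite{CMP}), and the only accompanying text is a heuristic paragraph explaining where the constants come from, via the expected number of spheres or hyperplanes meeting a small ball $B_\eps$ ($S_d\lambda\eps$, $2\lambda\eps$ and $2S_d\lambda\eps$ for the three models). Your proposal therefore goes well beyond what the paper offers, and its quantitative content is consistent with the paper's heuristic. Your bookkeeping checks out: the exact scaling identity $\lambda D_0^\lambda \sim D_0^1$ is correct; the one-to-one parametrization of the $D_0^1$ process on $(0,\infty)\times\mathbb{S}$ does carry intensity $(2/S_d)\,dR\,d\sigma$ because of the folding $(R,\theta)\sim(-R,-\theta)$; the vacancy conditioning for $C_0^\lambda$ correctly restricts centers to $r>0$, giving half that intensity and hence a cell distributed as $2C$ by scaling; and for $E_0^\lambda$ both signs of $r$ contribute, restoring the factor $2$ and giving limit $C$ before the final dilation. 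This is, in structure, the argument actually carried out in \cite{MP} and \cite{CMP}, and it closely parallels the paper's own proof of Theorem \ref{intersection_conv} in Section \ref{sec:coupling} (localization to a shell, sphere-to-hyperplane replacement, control of the cell functional).

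The one caveat is that your step (i)--(iii) resolution of ``process convergence implies cell convergence'' is asserted rather than executed, and that is where all the analytic work lives: the uniform-in-$\lambda$ tail bound confining the rescaled cell to a fixed ball is the analogue of the coupon-collector argument of Fact \ref{sphere_partition} and Proposition \ref{boundary_restrict}, and the continuity of the cell map off degenerate configurations requires excluding near-tangential intersections, exactly as in the $\lambda^{-1/4}$ angular-separation step of Proposition \ref{coupling_prop}. As a proof plan your proposal is sound and correctly calibrated, but to count as a proof these two steps would need to be carried out (or the result simply cited, as the paper does).
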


\eject

The scaling in this theorem is nontrivial and deserves some comment. Consider first the tessellation models. As $\lambda\to\infty$, the expected volume of the connected component containing the origin is inversely proportional to the expected number of connected components in $B_{\eps} = \{x:||x|| < \eps \}$. This latter quantity is in turn proportional to the number of points of intersection (of spheres or hyperplanes) in $B_{\eps}$. The number of such intersection points scales as $\lambda^d$, since each intersection point is determined by $d$ spheres (respectively hyperplanes), so the expected volume of the Crofton cell scales as $\lambda^{-d}$, leading to a linear scale factor of $1/\lambda$. Finally, the constants can be determined by computing the expected number of spheres (respectively hyperplanes) meeting $B_{\eps}$ in each of the three models: for $C_0^\lambda$, $D_0^\lambda$ and $E_0^\lambda$ these are asymptotically $S_d\lambda\eps,2\lambda\eps$ and $2S_d\lambda\eps$ respectively.

Our contribution is to introduce a new model to this group, which we term the \emph{boolean intersection model}. Let $\mathbb{B}$ denote the closed unit ball in $\mathbb{R}^d$, and let $\mathcal{C}_\lambda$ be a Poisson point process with intensity $\lambda$ on $\mathbb{B}$. (We will focus on the uniform intensity case, and discuss in Section \ref{sec:prelim} some ideas that apply for general intensity measures). Let

\begin{equation} \label{intersection_def_ball}
I^\lambda = \mathbb{B} \cap \left( \bigcap_{C \in \mathcal{C}_\lambda} (C + \mathbb{B}) \right)
\end{equation}

\noindent be the intersection of copies of $\mathbb{B}$ shifted by $C \in \mathcal{C}_{\lambda}$. By definition, set $I^\lambda = \mathbb{B}$ if $\mathcal{C}_{\lambda} = \emptyset$. Note that for each $\lambda$, $I^\lambda$ is a convex (connected) set contained in $\mathbb{B}$, containing $0$. Our goal will be to show that $I^\lambda$ shares the same limit as the Boolean and Poisson hyperplane tessellation processes, up to a constant.

\begin{theorem}\label{intersection_conv} Let $C$ denote the law of the normalized Crofton cell $D_0^1$. The following distributional convergence holds:

\begin{equation} \label{main_conv}
\lim_{\lambda \to \infty} S_d\lambda I^\lambda \sim 2C.
\end{equation}

\end{theorem}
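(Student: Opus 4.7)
The plan is to zoom in on $I^\lambda$ near the origin, where the binding constraints come from Poisson points $C \in \mathcal{C}_\lambda$ lying within $O(1/\lambda)$ of $\partial \mathbb{B}$, and then to show that after rescaling these constraints converge to those defining a Poisson hyperplane tessellation cell. Matching up intensities with Theorem~\ref{CMP} will yield the factor of $2$ in the limit.

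\textbf{Local coordinates and linearization.} I would parametrize each $C \in \mathcal{C}_\lambda$ by $(\phi, t) \in \mathbb{S}^{d-1} \times (0, \lambda]$ via $C = -(1-t/\lambda)\phi$; a Jacobian calculation then shows the induced point process on $(\phi, t)$-space has intensity $(1-t/\lambda)^{d-1}\, d\phi\,dt$, which converges to $d\phi \, dt$ on $\mathbb{S}^{d-1} \times (0, \infty)$ as $\lambda \to \infty$. Setting $\tilde y = S_d \lambda\, y$ and expanding $\|y - C\|^2 \leq 1$ in powers of $1/\lambda$, the ball constraint $y \in B(C, 1)$ becomes the half-space constraint $\tilde y \cdot \phi \leq S_d\, t$ to leading order, with an error of order $1/\lambda$ uniform for $(\tilde y, t)$ in compact sets.

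\textbf{Identification of the limit.} Combining these convergences, $\tilde I^\lambda := S_d\lambda\, I^\lambda$ should converge in distribution (in the Hausdorff topology on compact convex sets) to
\[
K \;=\; \bigl\{\tilde y \in \R^d : \tilde y \cdot \phi \leq S_d\, t \text{ for all } (\phi, t) \in \mathcal{P}\bigr\},
\]
where $\mathcal{P}$ is a Poisson point process on $\mathbb{S}^{d-1} \times (0, \infty)$ with intensity $d\phi \, dt$. The substitution $\sigma = S_d\, t$ identifies $K$ as the zero-cell of a Poisson hyperplane tessellation whose intensity on the folded half-space $\sigma > 0$ is $(1/S_d)\, d\phi\, d\sigma$. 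The analogous folded intensity for $D_0^1$ (from the definition in \eqref{pht_def} with $\lambda = 1$) is $(2/S_d)\, d\theta\, dr$, so $K$ has precisely half the intensity of $D_0^1$; since halving the intensity of a hyperplane tessellation doubles the linear scale of its zero-cell, we conclude $K \sim 2 D_0^1 \sim 2C$.

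\textbf{Main obstacle: tail control and tightness.} The technical core is to upgrade the convergences above to weak convergence of $\tilde I^\lambda$ as a random compact convex set. Fixing any $M > 0$, a half-space $\{\tilde y \cdot \phi \leq S_d\, t\}$ contains $B(0, M)$ as soon as $t > M/S_d$, so only $C$'s with $t \leq M/S_d$ can affect $\tilde I^\lambda \cap B(0,M)$; on this bounded range the convergence of the point process and of the half-space constraints is routine. To pass from convergence on each $B(0, M)$ to convergence of the whole set, one shows tightness of $\tilde I^\lambda$ by using the uniform-in-$\lambda$ presence (with high probability) of a point $C \in \mathcal{C}_\lambda$ near $\partial\mathbb{B}$ in every angular sector, which provides a bounding half-space in every direction. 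Finally, the explicit intersection with $\mathbb{B}$ in the definition of $I^\lambda$ becomes $\tilde I^\lambda \subset S_d\lambda\,\mathbb{B}$ after rescaling, a constraint which recedes to infinity and is vacuous in the limit.
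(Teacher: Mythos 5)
Your proposal is correct in outline, but it takes a genuinely different route from the paper. The paper never linearizes the ball constraints directly into a Poisson hyperplane process; instead it couples $I^\lambda$ to the \emph{sphere tessellation} model $E_0^\lambda$ (with intensity $\lambda/2$), by reflecting the tessellation points lying just outside $\mathbb{B}$ to points $(R-2)\theta$ just inside, and then invokes the Michel--Paroux convergence (Theorem \ref{CMP}) to identify the limit as $2C$; the restriction to the boundary shell is done with a covering-of-the-sphere partition plus a coupon-collector bound, and the error of the reflection is controlled quantitatively in Hausdorff distance (giving a rate $\lambda^{-1/4}$ after rescaling). Your argument instead re-derives the hyperplane limit from scratch: the $(\phi,t)$ reparametrization with limiting intensity $d\phi\,dt$, the expansion of $\|y-C\|^2\le 1$ into $\tilde y\cdot\phi\le S_d t+O(1/\lambda)$, and the intensity bookkeeping identifying the limit as the zero-cell of a folded hyperplane process with half the intensity of $D_0^1$ (hence $2D_0^1=2C$) are all correct, and your triangle-inequality observation that only points with $t\le M/S_d$ can exclude anything in $B(0,M/(S_d\lambda))$ is a cleaner localization than the paper's $\log^2\lambda/\lambda$ shell. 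What your route buys is self-containedness and a transparent derivation of the constant $2$; what the paper's route buys is an explicit quantitative coupling and the ability to outsource the hard limit identification to \cite{MP}. The one step you label ``routine'' that deserves real care is passing from convergence of the constraint configurations to Hausdorff convergence of the intersection sets: the zero-cell is not a continuous functional of the configuration near degenerate arrangements (nearly tangent facets, nearly coplanar normals), and the paper spends a nontrivial part of Proposition \ref{coupling_prop} excluding exactly these small-angle events. You would need an analogous general-position/continuity lemma (a.s. the limiting cell is a simple polytope with no supporting hyperplane tangent to it, plus a stability estimate) to close your argument, together with a Skorokhod-type realization of the point-process convergence; with that supplied, your proof is complete.
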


Write $|I|$ for the Lebesgue measure of a measurable set $I \subset \mathbb{R}^d$, and $\omega_d = $ vol$(\mathbb{B}) = \frac{1}{d} S_d$.

\begin{proposition}\label{volume_conv} The limiting volume of the intersection is given by

\begin{equation}
\lim_{\lambda \to \infty} \lambda^d\E |I^\lambda| = \frac{d! \omega_d}{\omega_{d-1}^d}.
\end{equation}

\noindent For example, when $d=2$, $\lim_{\lambda \to \infty} \lambda^2\E |I^\lambda| = \frac{\pi}{2}.$ \end{proposition}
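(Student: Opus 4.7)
I would start by writing the expected volume as an integral of pointwise coverage probabilities. For $x \in \mathbb{B}$, one has $x \in I^\lambda$ if and only if every $C \in \mathcal{C}_\lambda$ satisfies $\|x-C\|\le 1$, i.e.\ $\mathcal{C}_\lambda$ places no points in $\mathbb{B}\setminus(x+\mathbb{B})$. Since $\mathcal{C}_\lambda$ is a Poisson process of intensity $\lambda$, this event has probability $\exp(-\lambda f(x))$, where $f(x) := |\mathbb{B}\setminus(x+\mathbb{B})|$. Fubini then gives
\begin{equation*}
\E|I^\lambda| \;=\; \int_\mathbb{B} \exp\bigl(-\lambda f(x)\bigr)\,dx.
\end{equation*}

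\textbf{Local expansion of $f$.} The next step is to obtain the leading-order behavior of $f$ near the origin. By rotational symmetry, $f(x)$ depends only on $r=\|x\|$; slicing the lens $\mathbb{B}\cap(re_1+\mathbb{B})$ by the hyperplane $\{y_1 = r/2\}$ expresses $f$ as a spherical-cap integral, yielding
\begin{equation*}
f(r) \;=\; \omega_{d-1}\int_0^{r} (1 - s^2/4)^{(d-1)/2}\,ds \;=\; \omega_{d-1}\,r + O(r^3).
\end{equation*}
In particular, $f$ is smooth, strictly increasing on $[0,1]$, and concave, so $f(r)/r$ is monotone decreasing and bounded below by $f(1)>0$.

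\textbf{Rescaling and dominated convergence.} I would rescale by $x = u/\lambda$, so $dx = \lambda^{-d}\,du$ and
\begin{equation*}
\lambda^d\,\E|I^\lambda| \;=\; \int_{\lambda\mathbb{B}} \exp\bigl(-\lambda f(u/\lambda)\bigr)\,du.
\end{equation*}
For each fixed $u$, the local expansion gives $\lambda f(u/\lambda) \to \omega_{d-1}\|u\|$, so the integrand converges pointwise to $\exp(-\omega_{d-1}\|u\|)$. The monotonicity of $f(r)/r$ furnishes a uniform lower bound $\lambda f(u/\lambda) \ge f(1)\,\|u\|$ for every $u\in\lambda\mathbb{B}$, providing the integrable dominating function $\exp(-f(1)\|u\|)$.

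\textbf{Evaluating the limit.} Applying dominated convergence and converting to polar coordinates gives
\begin{equation*}
\lim_{\lambda\to\infty}\lambda^d\,\E|I^\lambda| \;=\; \int_{\mathbb{R}^d} e^{-\omega_{d-1}\|u\|}\,du \;=\; S_d\int_0^\infty r^{d-1} e^{-\omega_{d-1}r}\,dr \;=\; \frac{S_d\,(d-1)!}{\omega_{d-1}^{\,d}} \;=\; \frac{d!\,\omega_d}{\omega_{d-1}^{\,d}},
\end{equation*}
using $S_d = d\,\omega_d$. Specializing to $d=2$ recovers $\pi/2$. The only mildly delicate point is justifying the uniform bound used in dominated convergence — but this is handled cleanly by concavity of $f$, so the argument reduces to the two computations above (local expansion of $f$, and the radial gamma integral).
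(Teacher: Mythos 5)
Your proof is correct, and it takes a genuinely different route from the paper. The paper's direct argument (Section \ref{sec:prelim}) goes through the directional radius $R^\lambda$: it shows $\lambda\omega_d F(R^\lambda)\to\mathrm{Exp}(1)$ in distribution, establishes uniform integrability of $\{\lambda R^\lambda\}$ to upgrade this to convergence of the $d$th moment, and then converts $\E(R^\lambda)^d$ into $\E|I^\lambda|$ via the rotational-invariance identity of Proposition \ref{area_calc}; a second proof routes through the coupling with the Crofton cell and Goudsmit's calculation. You instead start from the Robbins-type identity $\E|I^\lambda|=\int_{\mathbb{B}}\exp(-\lambda f(x))\,dx$ — which is exactly the paper's Fact \ref{general_expectation}, with $f(x)=\omega_d F(\lVert x\rVert)$ — and extract the asymptotics by the Laplace-type rescaling $x=u/\lambda$ plus dominated convergence. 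Your exact slicing formula $f(r)=\omega_{d-1}\int_0^r(1-s^2/4)^{(d-1)/2}\,ds$ is correct (it reproduces the paper's expansion $F(r)=\tfrac{\omega_{d-1}}{\omega_d}r-O(r^3)$), and the concavity of $f$ does give the uniform bound $\lambda f(\lVert u\rVert/\lambda)\ge f(1)\lVert u\rVert$ on $\lambda\mathbb{B}$, so the dominating function $e^{-f(1)\lVert u\rVert}$ is legitimate and the gamma integral closes the computation. Interestingly, the paper remarks after Fact \ref{general_expectation} that this integral ``doesn't admit a simple asymptotic expansion in $\lambda$'' and switches to the moment method; your argument shows that the leading-order asymptotics (which is all Proposition \ref{volume_conv} requires) do follow cleanly from that integral. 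What you give up relative to the paper is the by-product information: the paper's route also yields the limiting exponential law of the scaled radius and the uniform-integrability machinery reused elsewhere, whereas your argument is shorter and self-contained for the volume statement alone.
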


One proof of this fact is via (\ref{main_conv}), and an appeal to classical results on the statistics of the Crofton cell $C$~\cite{Goud}; we outline the details in Section \ref{sec:ccell}. We give an alternative proof based on general results for the intersection model in Section \ref{sec:prelim}. Similar formulas hold for a more general class of boolean intersection models, namely where the underlying point process is not uniform, or the balls are exchanged for other compact sets. For simplicity we focus on the case of balls of fixed radius, and comment on generalizations in Section \ref{general_intmodel}. Additionally, one can formulate all these results using `fixed count' versions of these models: that is, instead of using an underlying Poisson process, for each $n$ define $I_n$ as the intersection of $n$ iid translations of $\mathbb{B}$. This model
also shares the Crofton cell $D_0^1$ as a scaling limit.

\subsection{Random variables in the space of closed sets}

Our main result, Theorem \ref{intersection_conv}, refers to a convergence of random sets. The usual modes of convergence -- in law, in probability, and in $L^p$ -- are trickier to define for random sets. We follow the approach taken in~\cite{CMP} and most of the existing literature, namely via the Hausdorff metric $d_H$ on the space $E$ of closed subsets of $\mathbb{R}^d$. Namely, if $X_n$ and $X$ are defined on the same probability space, we say $X_n \to X$ if for every $\epsilon > 0$,

\begin{equation}\label{prob_conv_def}
\p(d_H(X_n, X) > \epsilon) \to 0 \hspace{5pt} \text{ as } n \to \infty.
\end{equation}

\noindent Our proof of Theorem \ref{intersection_conv} gives an explicit coupling between the intersection model and the Crofton cell, under which (\ref{prob_conv_def}) holds.

\section{Overview of results} \label{sec:outline} 

Although our main result is Theorem \ref{intersection_conv}, we will present two different approaches to its corollary, Proposition \ref{volume_conv}. The first is direct, and appears in Section \ref{sec:prelim}. The second is via Theorem \ref{intersection_conv}, and occupies Sections \ref{sec:lemmas}, \ref{sec:coupling} and \ref{sec:ccell}. Below, we summarize each approach in turn.

In Section \ref{sec:prelim}, we first consider a single ball $B$, whose center lies inside the unit ball $\mathbb{B}$. A simple volume calculation lets us estimate (in Proposition \ref{G_formula}) the probability that $B$ contains the line segment $[0,r]$ on the positive $x$-axis. Next, we consider the full model, in which unit balls are centered at points of a Poisson process of intensity $\lambda$ inside $\mathbb{B}$, forming an intersection $I^{\lambda}$. Write $R^{\lambda}$ for the radius of $I^{\lambda}$ in the positive $x$ direction (for instance). Proposition \ref{G_formula}, together with an approximation argument, allows us to show that the suitably scaled radius $R^{\lambda}$ converges in distribution to an exponential random variable of mean one. Finally, Proposition \ref{area_calc} relates the expected volume of $I^{\lambda}$ to the $d$th moment of $R^{\lambda}$, completing the calculation of $\E |I^{\lambda}|$ and proving Proposition \ref{volume_conv}.

As part of this analysis, we also prove a parallel series of results for a related hyperplane model $K^{\lambda}$, which approximates the ball model $I^{\lambda}$ near the origin.

Section \ref{sec:lemmas} contains some probabilistic and geometric lemmas that we will use in Section \ref{sec:coupling}.

In Section \ref{sec:coupling}, we prove Theorem \ref{intersection_conv}. The main idea is to couple the intersection model with a sphere tessellation model. With $I^{\lambda}$ as above, and $J^{\lambda}$ defined as the cell containing the origin in an appropriately scaled sphere tessellation model, we show that, in our coupling, the scaled Hausdorff distance between $I^{\lambda}$ and $J^{\lambda}$ converges to 0 almost surely as $\lambda \to \infty$. To achieve this, we carry out the following steps:

\begin{enumerate} \item Show that only points of the Poisson process close to the boundary -- namely, within distance $\eps_\lambda = \frac{\log^2 \lambda}{\lambda}$ of $\partial \mathbb{B}$ -- contribute to the shape of $I^\lambda$ (asymptotically almost surely as $\lambda \to \infty$). We use a coupon-collector type process on $\partial \mathbb{B}$ as a test: if there are points of $\mathcal{C}_\lambda$ close to $\partial \mathbb{B}$ that are ``spread out enough" (in terms of angle), then $I^\lambda$ must be contained within a small ball at the origin.

\item Match spheres close to $\partial \mathbb{B}$ from the intersection model with spheres close to $\partial \mathbb{B}$ from the tessellation model. Since some of the sphere centers in the tessellation model land outside $\mathbb{B}$, while the sphere centers in the intersection model are confined to $\mathbb{B}$, we must match them by shifting. We accordingly identify each point $R \theta$ of the underlying tessellation process (with $R \in (1, 1 + \eps_\lambda)$ and $\theta \in \partial \mathbb{B}$) with the point $(R-2)\theta$ in the intersection process. The spheres centered at these two points have almost the same effect on $J^\lambda$, except that one makes a `concave' boundary and the other a `convex' boundary.

 \item Show that the error in carrying out Step 2, in terms of the Hausdorff distance between the suitably scaled sets $I^\lambda$ and $J^{\lambda}$, is negligible. This involves a careful coupling between the underlying point processes: since we scale everything by a factor of $\lambda$, it is necessary to have an error of order $o(\lambda^{-1}),$ not just $o(1)$.

\end{enumerate}

Finally, in Section \ref{sec:ccell}, we derive Proposition \ref{volume_conv} from Theorem \ref{intersection_conv}, using the $d$-dimensional version of Goudsmit's 1945 calculation~\cite{Goud} for the area of the Crofton cell. Although the relevant formulas appear in the work of Matheron~\cite{Math} (Chapter 6), our treatment is short and self contained.

\section{Statistics for the intersection model} \label{sec:prelim} 

To warm up, consider the following generic intersection process on $\R$. Let $\mathcal{C}_\lambda$ be a Poisson process with intensity $\lambda$ on $[-1,1]$, and define

\begin{equation}
U = U_{\lambda} = [-1,1] \cap \left( \bigcap_{c \in C_{\lambda}} c + [-1,1] \right)
\end{equation}

\noindent where $c + [-1,1] = [c-1, c+1]$ for $c \in \R$, with the convention that $U = [-1,1]$ if $\mathcal{C}_{\lambda} = \emptyset$. $U$ is always an interval, so it is determined by $\sup U$ and $\inf U$. The distribution of the infimum can be determined as follows. We have that $\inf U < -u$ if and only if $\max \mathcal{C}_{\lambda} < 1-u$, which in turn holds if and only if each element of $\mathcal{C}_{\lambda}$ is less than $1-u$, i.e., if no point of $\mathcal{C}_{\lambda}$ lies in the interval $[1-u,1]$. The latter event has probability $e^{-\lambda u}$, so $-\inf U$ is approximately distributed as $\text{Exp}(\lambda^{-1})$ as $\lambda \to \infty$.

\noindent Note that $\inf U$ is a function of $\mathcal{C}_{\lambda} \cap [0,1]$ and $\sup U$ is a function of $\mathcal{C}_{\lambda} \cap [-1,0]$; basic properties of Poisson processes imply that $\inf U$ and $\sup U$ are independent. Thus, as $\lambda \to \infty$, the left and right endpoints of $U$ shrink to 0 independently, and 

\begin{equation}
|U_{\lambda}| \approx \text{Exp}(\lambda^{-1}) + \text{Exp}(\lambda^{-1}) \sim \text{Gamma}(2, \lambda^{-1}).
\end{equation}

\subsection{Ball and hyperplane intersection models}

We now explore some basic properties of two intersection models: the ball intersection model $I^\lambda$ (defined as in (\ref{intersection_def_ball})), and an analogous intersection model with hyperplanes. Let $\widetilde{\cC_\lambda}$ be a Poisson point process on $\mathbb{B}$ with intensity measure $\lambda (\nu \times \sigma)$, where $\sigma$ is surface-area measure on $\mathbb{S}$ and $\nu$ is the measure supported on $[0,1]$ given by

\begin{equation}
\nu(0,r) = \frac{1}{d} \left(1-(1-r)^d\right).
\end{equation}

\noindent Define the hyperplane intersection model

\begin{equation} \label{intersection_def}
K^{\lambda} = \mathbb{B} \cap \left( \bigcap_{\widetilde{C} \in \widetilde{\cC_{\lambda}}} \mathbb{H}(\widetilde{C}) \right),
\end{equation}

\noindent where $\mathbb{H}(\widetilde{C})$ is the half-space normal to the vector $\widetilde{C}$ and containing $0$, i.e.,

\begin{equation}
\mathbb{H}(\widetilde{C}) = \widetilde{C} + \{x \in \mathbb{R}^d: x \cdot \widetilde{C} < 0 \}.
\end{equation}

\noindent By definition, $K^\lambda = \mathbb{B}$ if $\widetilde{\cC_\lambda} = \emptyset$. The choice of $\nu$ is special, because it makes $K^\lambda$ asymptotically identical to the Boolean intersection model $I^\lambda$ with copies of $\mathbb{B}$ defined earlier -- the difference is that the copies of the unit disk have been exchanged for half spaces. Indeed, placing a single disk $\mathbb{B}$ with center $Z\Theta$ is like placing the half space $\mathbb{H}((Z-1)\Theta)$;
the difference is asymptotically negligible as $\lambda \to \infty$. The error is quantified precisely in terms of the Hausdorff distance in Section \ref{sec:lemmas}. In this section we work with both $K^\lambda$ and $I^\lambda$ simultaneously: the same ideas can be used to analyze both models, with slightly different details.

For any $\theta \in \mathbb{S}$, define the `radius in the $\theta$ direction:'

\begin{equation} \label{radius_def}
R^{\lambda}(\theta) = \sup \{t \in (0,1): t \theta \in I^\lambda\}, \hspace{10pt}Q^{\lambda}(\theta) = \sup \{t \in (0,1): t \theta \in K^\lambda\}.
\end{equation}

\noindent The collections $\{R^\lambda(\theta): \theta \in \mathbb{S}\}$ and $\{Q^{\lambda}(\theta): \theta \in \mathbb{S}\}$ each consist of identically distributed but not independent variables: $R^\lambda(\theta)$ and $R^\lambda(\theta')$ are not independent for $\theta \neq \theta'$ (except for the special case $\theta' = -\theta$). We simply write $R = R^{\lambda}$ or $Q = Q^{\lambda}$ for the distribution of any one of the $R^{\lambda}(\theta)$ or $Q^{\lambda}(\theta)$, respectively. For $r \in (0,1)$, denote by $F(r)$ and $G(r)$ the probabilities

\begin{equation}\label{Gmu_def}
F(r) = \p(r e_1 \notin B), \hspace{10pt} G(r) = \p(r e_1 \notin H),
\end{equation}

\noindent in which $B=\mathbb{B} + C$, where $C$ is a uniformly distributed point in $\mathbb{B}$, $H$ is an independent copy of one of the random hyperplanes $\mathbb{H}(\widetilde{C}), \widetilde{C} \in \widetilde{\cC_\lambda}$, and $\vec{e}_1 = ( 1, 0, \ldots, 0) \in \R^d.$ The functions $F$ and $G$ are closely connected to the distributions of $R$ and $Q$ respectively, since for $r \in (0,1)$,

\begin{equation} R > r \iff r \vec{e}_1 \in \mathbb{B} + C \text{ for every } C \in \cC_\lambda, \end{equation}

\noindent and similarly for $Q$. Recall that $\omega_d$ denotes the volume of the unit ball $\mathbb{B}$ in $\mathbb{R}^d$. We have the following formulas for $F$ and $G$:

\begin{proposition}\label{G_formula} Consider the intersection boolean models $I^\lambda$ and $K^\lambda$ in dimension $d \geq 1$. \begin{itemize} \item[i.] Let $F$ be as in (\ref{Gmu_def}). For any $r \in (0,1)$,

\begin{align} F(r) &= \frac{1}{\omega_d} \left| \mathbb{B} \setminus (r \vec{e_1} + \mathbb{B}) \right| \\
& = \frac{\omega_{d-1}}{\omega_d} r - O(r^3) \text{ as } r \to 0,
\end{align}

  \begin{center}
	\begin{figure}
		\hspace{43pt} \includegraphics[scale=.35]{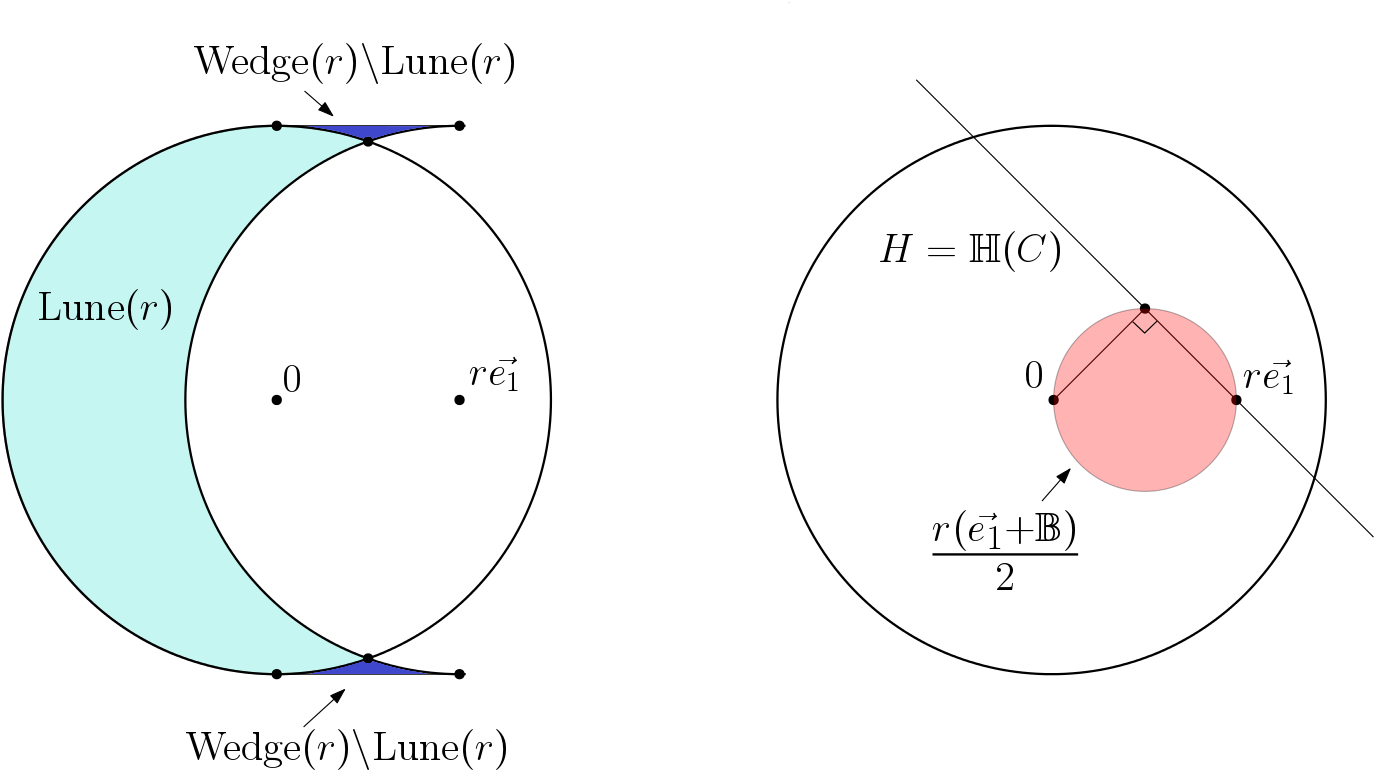}
		\caption{The left diagram shows $F(r)$ as the area of the teal set. Note that if any disk center lies in the lune, then $I^\lambda$ does not contain the point $(r,0)$. The error between the Lune and Wedge sets is shown in blue: its area is negligible, and this gives a quick approximation of the area of the lune. The analogous set for the hyperplane model is shown on the right: $G(r)$ is the mass placed on the red set by the probability measure $\frac{1}{S_d} \nu \times \sigma$. The fact that the red set is a circle follows from elementary geometry: the angle in a semicircle is a right angle.}
	\end{figure}
  \end{center}

\noindent where $| \cdot | $ denotes the volume (Lebesgue measure) in $\mathbb{R}^d$.

\item[ii.] Let $G$ be as in (\ref{Gmu_def}), and write $\mathbb{S}^+ = \{\theta \in \mathbb{S}: \langle \theta, \vec{e_1} \rangle > 0 \}$. For any $r \in (0,1)$,

\begin{align} \label{G_integral} G(r) &=   \frac{1}{\omega_d} \int_{\mathbb{S}^+} \nu(0, r \langle \vec{e}_1, \theta \rangle )\, d\sigma(\theta)\\
& =\nu \times \sigma \left(\frac{r(\vec{e}_1+\mathbb{B})}{2}\right) \\
& =  \frac{(d-1)\omega_{d-1}}{d \omega_{d}} \sum_{k=1}^d (-1)^{1+k} \binom{d}{k} \frac{\Gamma(\frac{d-1}{2}) \Gamma(\frac{k+1}{2})}{2\Gamma(\frac{k+d}{2})}r^k \\
& = \frac{\omega_{d-1}}{\omega_d} r- O(r^2) \text{ as } r \to 0.\end{align}

\end{itemize}
 \end{proposition}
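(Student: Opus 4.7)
For part~(i), the first step is to observe that, since $\mathbb{B}$ is centrally symmetric, $r\vec{e}_1 \in \mathbb{B} + C$ if and only if $r\vec{e}_1 - C \in \mathbb{B}$ if and only if $C \in r\vec{e}_1 + \mathbb{B}$. As $C$ is uniform on $\mathbb{B}$, this immediately gives $F(r) = |\mathbb{B} \setminus (r\vec{e}_1 + \mathbb{B})|/\omega_d$. For the small-$r$ expansion I would compute the lune volume by slicing in the $\vec{e}_1$ direction: the cross-section of $\mathbb{B} \cap (r\vec{e}_1 + \mathbb{B})$ at $x_1 = t$ is a $(d-1)$-ball, and using the symmetry around $x_1 = r/2$ one obtains
\[
|\mathbb{B} \setminus (r\vec{e}_1 + \mathbb{B})| \;=\; 2\omega_{d-1}\int_0^{r/2}(1-u^2)^{(d-1)/2}\,du.
\]
Because the integrand is even in $u$, the Taylor expansion of this integral in $r$ contains only odd powers, with leading term $\omega_{d-1} r$ and next term of order $r^3$. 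Dividing by $\omega_d$ gives the claim.

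For part~(ii), I would parametrize $\widetilde{C} = Z\Theta$ with $Z \in (0,1]$ and $\Theta \in \mathbb{S}$, so that $(Z,\Theta)$ has distribution $(\nu \times \sigma)/\omega_d$ (the normalization $(\nu \times \sigma)(\mathbb{B}) = \omega_d$ comes from $\nu(0,1) = 1/d$ and $\sigma(\mathbb{S}) = S_d = d\omega_d$). A direct calculation gives $\mathbb{H}(Z\Theta) = \{y : y \cdot \Theta < Z\}$, so $r\vec{e}_1 \notin \mathbb{H}(Z\Theta)$ if and only if $\Theta \in \mathbb{S}^+$ and $Z \le r \langle \vec{e}_1, \Theta\rangle$. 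Integrating against $(\nu \times \sigma)/\omega_d$ yields the first integral representation. To identify this region in Cartesian coordinates with $r(\vec{e}_1 + \mathbb{B})/2$, observe that $\{Z\Theta : 0 < Z \le r\theta_1,\ \theta_1 > 0\} = \{x \ne 0 : |x|^2 \le r x_1\}$, and completing the square rewrites $|x|^2 \le r x_1$ as $|x - (r/2)\vec{e}_1|^2 \le r^2/4$, the closed ball of radius $r/2$ on the diameter $[0, r\vec{e}_1]$. This is precisely the Thales ``angle in a semicircle'' picture described in the figure caption.

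For the explicit finite sum, I would expand
\[
\nu(0, r\theta_1) \;=\; \frac{1}{d}\bigl(1 - (1 - r\theta_1)^d\bigr) \;=\; \frac{1}{d}\sum_{k=1}^{d}(-1)^{k+1}\binom{d}{k}(r\theta_1)^k
\]
and integrate termwise using the standard moment identity
\[
\int_{\mathbb{S}^+}\theta_1^k\,d\sigma(\theta) \;=\; \frac{S_{d-1}}{2}\cdot\frac{\Gamma(\tfrac{k+1}{2})\Gamma(\tfrac{d-1}{2})}{\Gamma(\tfrac{k+d}{2})},
\]
which follows from $\theta_1 = \cos\phi$ and the Beta-function form of $\int_0^{\pi/2}\cos^k\phi\sin^{d-2}\phi\,d\phi$. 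Substituting $S_{d-1} = (d-1)\omega_{d-1}$ yields the displayed finite sum. For the $r \to 0$ asymptotic I would simply isolate the $k=1$ term: using $\Gamma(\tfrac{d+1}{2}) = \tfrac{d-1}{2}\Gamma(\tfrac{d-1}{2})$ collapses its coefficient to $\omega_{d-1}/\omega_d$, while $k\ge 2$ contributes $O(r^2)$. Equivalently and more transparently, $\nu(0,r\theta_1) = r\theta_1 + O(r^2)$ combined with the classical identity $\int_{\mathbb{S}^+}\theta_1\,d\sigma = \omega_{d-1}$ yields the leading order at once.

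I do not anticipate a genuine obstacle: the whole proposition is a chain of explicit geometric and calculus identities. The one mildly nonobvious step is the Thales identification of the integration region with $r(\vec{e}_1 + \mathbb{B})/2$; everything else is central symmetry of $\mathbb{B}$, the binomial theorem, and Beta-function bookkeeping.
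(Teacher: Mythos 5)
Your proof is correct. Part (ii) follows the paper's argument essentially verbatim: the same polar parametrization of $\widetilde{C}$, the same completing-the-square/Thales identification of the region with $\tfrac{r}{2}(\vec{e}_1+\mathbb{B})$, the same binomial expansion of $\nu$ and Beta-function evaluation of $\int_0^{\pi/2}\cos^k\alpha\,\sin^{d-2}\alpha\,d\alpha$, and the same simplification of the $k=1$ coefficient. Part (i) is where you genuinely diverge, and to your advantage. The paper compares the lune $\mathbb{B}\setminus(r\vec{e}_1+\mathbb{B})$ with the wedge $\{\theta+t\vec{e}_1:\theta\in\mathbb{S}^-,\,t\in(0,r)\}$, notes $|\mathrm{Wedge}(r)|=\omega_{d-1}r$, and then bounds the symmetric-difference set by enclosing it in a region with triangular cross sections rotated about the $\vec{e}_1$-axis; this yields the $O(r^3)$ error but requires a somewhat delicate geometric containment argument in general dimension. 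Your slicing identity
\[
\left|\mathbb{B}\setminus(r\vec{e}_1+\mathbb{B})\right|=2\omega_{d-1}\int_0^{r/2}(1-u^2)^{(d-1)/2}\,du
\]
is exact, and the parity observation (the antiderivative of an even function vanishing at $0$ is odd) gives the expansion $\omega_{d-1}r+O(r^3)$ with no auxiliary geometry at all; it also shows the $r^3$ coefficient is $-\tfrac{d-1}{24}\omega_{d-1}$ explicitly, which the paper's method only bounds. The only thing the paper's wedge picture buys that yours does not is the visual explanation in Figure 1, which is expository rather than logical. Both routes are complete and correct.
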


Note that $F$ and $G$ have the same linear approximation near $0$: this is a reflection of the fact that the unit ball has smooth boundary, and hence locally is a hyperplane.

\begin{proof} \itemize \item[$i.$] Write $B = C + \mathbb{B}$ where $C$ is uniform over $\mathbb{B}$, and note that for $r \in (0,1)$,

\begin{equation} r \vec{e_1} \notin B \iff \left\lVert C - r \vec{e_1} \right\rVert > 1 \iff C \in \mathbb{B} \setminus (r \vec{e_1} + \mathbb{B}). \end{equation}

The first equality follows immediately. The shape Lune$(r) = \mathbb{B} \setminus (r\vec{e_1} + \mathbb{B})$ is very close to the shape

\begin{equation}
\text{Wedge}(r) = \{\theta + t \vec{e_1}: \theta \in \mathbb{S}^-, t \in (0,r)\}
\end{equation}

obtained by shifting half of $\mathbb{S}$ by $r$ along the $\vec{e_1}$ axis, where $\mathbb{S}^- = \{\theta \in \mathbb{S}: \langle \theta, \vec{e_1} \rangle < 0 \}$. The volume of Wedge$(r)$ is

\begin{equation}
|\text{Wedge}(r)| = \omega_{d-1} r.
\end{equation}

Note that Lune$(r) = \text{Wedge}(r) \cap \mathbb{B}$. 
The ``error" set $\text{Wedge}(r)\setminus\text{Lune}(r)$ is contained in a region with triangular cross sections: in dimension $d = 2$, it has the form

\begin{align}\text{Wedge}(r) \setminus \text{Lune}(r)  = \{(x, y) \in \mathbb{R}^2: x \in (0,r), y \in (\sqrt{1-z^2}, 1)\},
\end{align}
where $z=\min(x,r-x)$ (see Figure 1). In arbitrary dimension $d$, the set $\text{Wedge}(r)\setminus\text{Lune}(r)$ is contained in the region formed by rotating the same region about the $e_1$ axis (i.e., through the copy of the $(d-2)$-dimensional unit sphere lying in the hyperplane $x^1 = 0$). Using the expansion $\sqrt{1-s^2} = 1 - \frac{s^2}{2} + O(s^4)$, it follows immediately that

\begin{equation}
|\text{Wedge}(r) \setminus \text{Lune}(r) | \leq (d-1)\omega_{d-1} \frac{r^3}{16} + O(r^5),
\end{equation}

as desired.

\item[$ii.$]Write $C = P\Theta$, where $P \in [0,1]$ is distributed according to $\nu$, and $\Theta \in \mathbb{S}$ is distributed according to $\sigma$. The corresponding hyperplane, pinned at $C$, is given by

\begin{equation}
H = \mathbb{H}(C) = \{x: \langle x, \Theta \rangle \leq P\},
\end{equation}

\noindent and thus

\begin{equation}
\p(r \vec{e}_1 \notin H) = \p(P < \langle r\vec{e}_1 , \Theta\rangle). 
\end{equation}

The first equality follows immediately by integrating. Moreover,

\begin{equation}
P<r\langle \vec{e}_1 ,\Theta\rangle \iff \left\lVert P\Theta-\frac{r}{2}\vec{e}_1 \right\rVert < \frac{r}{2} \iff
C = P\Theta \in \frac{r(\vec{e}_1+\mathbb{B})}{2},
\end{equation}

proving the second equality. To obtain the explicit sum formula, we can evaluate the integral directly by expanding the measure $\nu$ as

\begin{equation}
\nu(0,r) = \frac{1}{d} \left(1-(1-r)^d \right) = \frac{1}{d} \sum_{k=1}^d (-1)^{1+k} \binom{d}{k} r^k.
\end{equation}

Thus for $d \geq 2$,

\begin{align} G(r) &= \frac{1}{\omega_d} \int_{\mathbb{S}^+} \nu(0,r \langle \vec{e_1}, \theta \rangle) \, d\sigma(\theta) \\
& = \frac{(d-1)\omega_{d-1}}{d \omega_{d}} \int_0^{\pi/2} \int_{\mathbb{S}^{d-1}} \sum_{k=1}^d (-1)^{1+k} \binom{d}{k} r^k \cos^k \alpha \sin^{d-2} \alpha \, d\phi \, d\alpha \\
& = \frac{(d-1)\omega_{d-1}}{d \omega_{d}} \sum_{k=1}^d (-1)^{1+k} \binom{d}{k} \left( \int_0^{\pi/2} \cos^k \alpha \, \sin^{d-2}\alpha \, d\alpha \right) r^k.
\end{align}

(This integral isn't valid when $d = 1$: in that case, $G(r) = F(r) = 2r.$) The $d\alpha$ integral evaluates to

\begin{equation}
\int_0^{\pi/2} \cos^k \alpha \, \sin^{d-2}\alpha \, d\alpha = \frac{\Gamma(\frac{d-1}{2}) \Gamma(\frac{k+1}{2})}{2\Gamma(\frac{k+d}{2})},
\end{equation}

and the coefficient of the linear term ($k = 1$) is $\frac{(d-1)\omega_{d-1}}{d \omega_d} \cdot d \cdot \frac{\Gamma(\frac{d-1}{2})}{2 \Gamma(\frac{d+1}{2})} = \frac{\omega_{d-1}}{\omega_d}$.

\end{proof}

For example, in dimension $d = 2$, an explicit computation -- namely, for the area of intersection of two circles -- yields

\begin{equation} \label{F_dim2formula}
F(r) = 1-\frac{1}{\pi} \left( 2\arccos(r/2) - r \sqrt{1-r^2/4}\right) = \frac{2r}{\pi} -\frac{r^3}{12\pi} + \cdots
\end{equation}

Directly evaluating the integral (\ref{G_integral}) gives

\begin{equation}
G(r) = \frac{1}{\pi} \int_0^{\pi/2} (2r \cos \theta - r^2 \cos^2 \theta) \, d\theta = \frac{2r}{\pi} - \frac{r^2}{4}.
\end{equation}

Recall that $R$ and $Q$ depend on $d$ via the underlying dimension of the boolean intersection models $I^\lambda$ and $K^\lambda$. The distributions of $R^\lambda$ and $Q^\lambda$ converge to exponential distributions as $\lambda \to \infty$:

\begin{proposition} \label{radius_distrib}

For any $d \geq 1$, as $\lambda \to \infty$,

\begin{equation} \lambda \omega_d F(R^\lambda) \to \text{\normalfont{Exp}}(1),\end{equation}

and

\begin{equation} \lambda \omega_d G(Q^\lambda) \to \text{\normalfont{Exp}}(1),\end{equation}

where the convergence is in distribution.

\end{proposition}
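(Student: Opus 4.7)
The plan is a direct computation: use the Poisson void probability to obtain an exact closed form for the tails of $R^\lambda$ and $Q^\lambda$, and then invert via the probability integral transform. All the geometric content has already been packaged into Proposition \ref{G_formula}, so what remains is purely measure-theoretic bookkeeping.

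First, I would derive the tail of $R^\lambda$. By the definition \eqref{radius_def}, the event $\{R^\lambda > r\}$ occurs iff $r\vec{e}_1 \in C + \mathbb{B}$ for every $C \in \cC_\lambda$, equivalently iff no point of $\cC_\lambda$ lies in the lune $\mathbb{B}\setminus(r\vec{e}_1+\mathbb{B})$. Proposition \ref{G_formula}(i) identifies the volume of this lune as $\omega_d F(r)$, so the Poisson void probability formula yields
\[
\p(R^\lambda > r) \;=\; \exp\bigl(-\lambda\omega_d F(r)\bigr).
\]
The parallel argument for $Q^\lambda$ uses the intermediate identity established in the proof of Proposition \ref{G_formula}(ii): $r\vec{e}_1 \notin \mathbb{H}(\widetilde{C})$ iff $\widetilde{C}\in \tfrac{r(\vec{e}_1+\mathbb{B})}{2}$. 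Since the $(\nu\times\sigma)$-mass of that set is $\omega_d G(r)$ by the same proposition, and $\widetilde{\cC_\lambda}$ has intensity measure $\lambda(\nu\times\sigma)$, the void probability formula gives
\[
\p(Q^\lambda > r) \;=\; \exp\bigl(-\lambda\omega_d G(r)\bigr).
\]

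Second, I would invoke the probability integral transform. Both $F$ and $G$ are continuous and strictly increasing on $(0,1)$, since the lune and the set $\tfrac{r(\vec{e}_1+\mathbb{B})}{2}$ each grow monotonically with $r$; hence both are invertible on their ranges. For any fixed $t > 0$, once $\lambda$ is large enough that $t/(\lambda\omega_d) < F(1^-)$, substituting $r = F^{-1}(t/(\lambda\omega_d))$ into the tail formula yields
\[
\p\bigl(\lambda\omega_d F(R^\lambda) > t\bigr) \;=\; e^{-t},
\]
and the same computation with $G$ in place of $F$ handles $Q^\lambda$. Sending $\lambda\to\infty$ gives the claimed convergence in distribution to $\text{Exp}(1)$.

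I do not anticipate any genuine obstacle: the argument is essentially that the CDFs of $\lambda\omega_d F(R^\lambda)$ and $\lambda\omega_d G(Q^\lambda)$ equal $1-e^{-t}$ exactly on an interval that grows to all of $(0,\infty)$. The only mild technicality is the degenerate case $R^\lambda = 1$ (which occurs with positive probability, in particular when $\cC_\lambda = \emptyset$), where $F^{-1}$ is undefined at the right endpoint; but this event has probability at most $e^{-\lambda\omega_d F(1^-)} \to 0$, so it makes no contribution to the limit law, and likewise for $Q^\lambda$.
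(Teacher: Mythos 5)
Your proposal is correct and follows essentially the same route as the paper: compute the exact tails $\p(R^\lambda>r)=\exp(-\lambda\omega_d F(r))$ and $\p(Q^\lambda>r)=\exp(-\lambda\omega_d G(r))$ via the Poisson void probability, then invert $F$ and $G$ to get $\p(\lambda\omega_d F(R^\lambda)>t)=e^{-t}$ exactly for $t$ in a window growing to all of $(0,\infty)$. Your additional remark about the degenerate event $\{R^\lambda=1\}$ is a harmless refinement the paper leaves implicit.
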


\begin{proof} For any $r \in (0,1)$, since $I^{\lambda}$ is the intersection of Poisson$(\lambda \omega_d)$ many copies of $\mathbb{B}$,

\begin{equation}
 \p(R > r) = \p(r \vec{e}_1 \in I^{\lambda}) = \exp(-\lambda \omega_d F(r)).
\end{equation}

Similarly,

\begin{equation}
\p(Q > r) = \exp(-\lambda \omega_d G(r)).
\end{equation}

\noindent Fix $z \in (0, \lambda \omega_d)$. Note that $F$ and $G$ are strictly increasing on $[0,1]$, and hence invertible. Thus we can write

\begin{align} \p(\lambda \omega_d F(R) > z) &= \p(R > F^{-1}(z / \lambda \omega_d )) \\
& = \exp(-\lambda \omega_d F(F^{-1}(z/\lambda \omega_d ))) \\
& = \exp(-z),
\end{align}

\noindent and similarly for $G(Q)$. Taking $\lambda \to \infty$ finishes the proof.
\end{proof}

\begin{corollary} \label{rad_dist_cor} The exact distributions of $\lambda \omega_d F(R)$ and $\lambda \omega_d G(Q)$ are given by

\begin{equation}\label{radius_exact}
\p(\lambda \omega_d F(R) > z) = \p(\lambda \omega_d G(Q) > z) = \begin{cases} e^{-z}, &0 \leq z < \lambda \omega_d \\ 0, &z > \lambda \omega_d. \end{cases}
\end{equation}

The collections $\{\lambda F(R)\}_{\lambda}$ and $\{\lambda G(Q)\}_\lambda$ are uniformly integrable.
\end{corollary}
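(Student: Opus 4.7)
The exact tail formula (\ref{radius_exact}) is essentially already in hand from the proof of Proposition \ref{radius_distrib}, which established $\p(R > r) = \exp(-\lambda \omega_d F(r))$ and $\p(Q > r) = \exp(-\lambda \omega_d G(r))$ for $r \in (0,1)$. Both $F$ and $G$ are strictly increasing and continuous on $[0,1]$: they vanish at $0$, have positive linear term $\frac{\omega_{d-1}}{\omega_d} r$ by Proposition \ref{G_formula}, and are monotone from the geometric description (the lune/circular set grows with $r$). Hence each admits a continuous inverse on its range. Setting $r = F^{-1}(z/(\lambda \omega_d))$ whenever $z/(\lambda \omega_d)$ lies in the range of $F$ gives
\begin{equation}
\p(\lambda \omega_d F(R) > z) = \p(R > F^{-1}(z/(\lambda \omega_d))) = \exp(-\lambda \omega_d F(F^{-1}(z/(\lambda \omega_d)))) = e^{-z},
\end{equation}
and identically for $(G, Q)$. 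Once $z$ is large enough that the event becomes impossible (i.e., exceeds $\lambda \omega_d$ times the maximum of $F(R)$), the probability drops to $0$. This is exactly the two-case formula (\ref{radius_exact}).

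For uniform integrability of $\{\lambda F(R)\}_\lambda$, the plan is to produce a $\lambda$-independent exponential envelope on the tails. From the formula just established,
\begin{equation}
\p(\lambda F(R) > s) \;=\; \p(\lambda \omega_d F(R) > \omega_d s) \;\leq\; e^{-\omega_d s}
\end{equation}
for every $\lambda > 0$ and every $s \geq 0$ (the inequality is an equality below the truncation point and trivial beyond). Using $\E[X^p] = \int_0^\infty p s^{p-1} \p(X > s)\,ds$ with $X = \lambda F(R)$ and $p = 2$, this yields
\begin{equation}
\sup_{\lambda>0} \E[(\lambda F(R))^2] \;\leq\; \int_0^\infty 2 s\, e^{-\omega_d s}\, ds \;=\; \frac{2}{\omega_d^2} \;<\; \infty,
\end{equation}
so the family is bounded in $L^2$ uniformly in $\lambda$, and hence uniformly integrable by the de la Vall\'ee-Poussin criterion. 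The identical argument with $G$ and $Q$ handles $\{\lambda G(Q)\}_\lambda$.

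I do not anticipate any serious obstacle: both halves are direct consequences of the tail identity from Proposition \ref{radius_distrib}. The only minor bookkeeping issue is the truncation range in (\ref{radius_exact}), which is irrelevant for uniform integrability since the uniform exponential envelope dominates it across all $\lambda$.
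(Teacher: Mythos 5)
Your proof is correct and is exactly the intended derivation: the paper states this corollary without proof, as it follows immediately from the identity $\p(\lambda\omega_d F(R)>z)=e^{-z}$ already computed in the proof of Proposition \ref{radius_distrib}, and the uniform exponential tail envelope $\p(\lambda F(R)>s)\le e^{-\omega_d s}$ gives a $\lambda$-uniform $L^2$ bound and hence uniform integrability. (A pedantic aside: since $F(1)<1$ for $d\ge 2$, the tail actually vanishes already for $z\ge\lambda\omega_d F(1)$, but this only strengthens the bound and is immaterial.)
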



Proposition \ref{radius_distrib} leads to a quick calculation for the expected area of $I^{\lambda}$ and $K^\lambda$ via the following proposition, which we state for general random sets. For a continuous function $f: \mathbb{S} \to [0,1]$, let

\begin{equation}
U(f) = \{r \theta: r \in [0, f(\theta)], \theta \in \mathbb{S}\}
\end{equation}

\noindent be the set with `radius' $f(\theta)$ in the direction $\theta$. Note that by definition, $I^\lambda = U(R^\lambda)$ and $K^\lambda = U(Q^\lambda)$, where $R^\lambda$ and $Q^\lambda$ are viewed as random functions on $\mathbb{S}$ as in (\ref{radius_def}).
In general, if $A\subset \R^d$ is closed and star-shaped about the origin, then the function $f_A: \mathbb{S} \to [0,1]$ given by

\begin{equation}
 f_A(\theta) = \sup\{t \in (0,1): t \theta \in A\}
\end{equation}

\noindent satisfies $A = U(f_A)$.

A random closed set $X$ is \emph{rotationally invariant} if for any $\Phi \in \text{SO}(d)$, $X$ has the same distribution as $\Phi(X) = \{\Phi(x): x \in X\}$.

\begin{proposition} \label{area_calc} Let $X$ be any random closed set contained in $\mathbb{B}$ that is almost surely star-shaped and rotationally invariant. Let $S: \mathbb{S} \to [0,1]$ be the random function, defined on the same probability space as $X$, satisfying $X = U(S)$. Then

\begin{equation}
\mathbb{E}|X| = \omega_d \E \left[S(\vec{e_1})^d\right].
\end{equation}

\end{proposition}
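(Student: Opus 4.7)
The plan is to compute $|X|$ directly in spherical coordinates using the radial function, then exchange the order of expectation and angular integration using rotational invariance.

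First, I would express the volume radially. Because $X$ is star-shaped about the origin with radial function $S$, writing $x = r\theta$ with $r \in [0,1]$ and $\theta \in \mathbb{S}$, the standard polar-coordinates Jacobian gives
\begin{equation}
|X| = \int_{\mathbb{S}} \int_0^{S(\theta)} r^{d-1} \, dr \, d\sigma(\theta) = \frac{1}{d}\int_{\mathbb{S}} S(\theta)^d \, d\sigma(\theta).
\end{equation}
Since $S \leq 1$ and $S$ is nonnegative, everything in sight is bounded and measurable, so Tonelli's theorem applies and
\begin{equation}
\mathbb{E}|X| = \frac{1}{d}\int_{\mathbb{S}} \mathbb{E}\bigl[S(\theta)^d\bigr] \, d\sigma(\theta).
\end{equation}

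Next, I would use rotational invariance to show the integrand is constant in $\theta$. For any $\Phi \in \mathrm{SO}(d)$ the radial function satisfies $f_{\Phi X}(\theta) = f_X(\Phi^{-1}\theta)$, so $S(\theta) \stackrel{d}{=} S(\Phi^{-1}\theta)$ by the assumed rotational invariance of $X$. Choosing $\Phi$ to map $\theta$ to $\vec{e_1}$ yields $\mathbb{E}[S(\theta)^d] = \mathbb{E}[S(\vec{e_1})^d]$ for every $\theta \in \mathbb{S}$. Substituting and using $S_d = d\omega_d$ gives
\begin{equation}
\mathbb{E}|X| = \frac{S_d}{d}\, \mathbb{E}\bigl[S(\vec{e_1})^d\bigr] = \omega_d \,\mathbb{E}\bigl[S(\vec{e_1})^d\bigr].
\end{equation}

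The only subtlety, and the point to be a little careful about, is justifying the joint measurability of $(\omega,\theta) \mapsto S(\theta)(\omega)$ that underlies the use of Tonelli. This follows from the star-shaped hypothesis and the fact that $X$ is a random closed set: $f_X(\theta) = \sup\{t \in [0,1] : t\theta \in X\}$ is an upper-semicontinuous function of $\theta$ for each fixed realization of $X$, and measurable in $\omega$ for each fixed $\theta$ (as a supremum over rationals of indicator variables of the measurable events $\{t\theta \in X\}$), which together give joint measurability. There is no real obstacle beyond this bookkeeping; the content of the proposition is just polar coordinates plus symmetry.
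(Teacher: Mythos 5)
Your proof is correct and follows essentially the same route as the paper's: the polar-coordinate volume formula for star-shaped sets (the paper relegates this to Appendix B), an interchange of expectation and angular integration via Fubini/Tonelli, and rotational invariance to make the integrand constant. The extra remark on joint measurability is a reasonable piece of bookkeeping that the paper leaves implicit.
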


\begin{proof} The volume of any closed, star-shaped set $A$ with $A = U(f)$, for a continuous function $f: \mathbb{S} \to [0,1]$, is given by

\begin{equation}
|A| = \frac{1}{d} \int_{\mathbb{S}} f(\theta)^d \, d\sigma(\theta).
\end{equation}

\noindent See Appendix \ref{appB} for a proof. Since $X$ is rotationally invariant, $\E\left[S(\theta)^d \right]$ does not depend on $\theta$. Thus,

\begin{align} \mathbb{E}|X| &=  \mathbb{E} \left[\frac{1}{d}\int_{\mathbb{S}} S(\theta)^d \, d\sigma(\theta) \right] \\
& = \frac{1}{d} \int_{\mathbb{S}} \E\left[S(\theta)^d\right] \, d\sigma(\theta) \\
& = \frac{1}{d} \int_{\mathbb{S}} \E\left[S(\vec{e_1})^d\right] \, d\sigma(\theta) \\
& = \frac{1}{d} \E\left[S(\vec{e_1})^d\right] \int_{\mathbb{S}} 1 \, d\sigma(\theta)\\
& = \omega_d \E\left[S(\vec{e_1})^d\right].
\end{align}

\noindent Since $X \subset \mathbb{B}$, the expression inside the expectation is almost surely bounded by $\omega_d$, and hence the interchange of integration and expectation is valid by Fubini's theorem.
\end{proof}

\noindent Applying this proposition to $I^\lambda$ and $K^\lambda$ gives:

\begin{corollary} \label{area_calc_cor} For any $d \geq 2$ and $\lambda > 0$,

\begin{equation} \mathbb{E}\left|I^\lambda\right| = \omega_d \mathbb{E}(R^{\lambda})^d, \hspace{10pt} \mathbb{E}\left| K^{\lambda} \right| = \omega_d \mathbb{E}(Q^\lambda)^d. \end{equation}

\end{corollary}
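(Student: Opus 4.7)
The plan is to apply Proposition \ref{area_calc} directly, with $X = I^\lambda$ (yielding the first identity) and $X = K^\lambda$ (yielding the second). Since $R^\lambda$ and $Q^\lambda$, viewed as functions on $\mathbb{S}$ via (\ref{radius_def}), satisfy $I^\lambda = U(R^\lambda)$ and $K^\lambda = U(Q^\lambda)$ essentially by definition, the task reduces to verifying the hypotheses of Proposition \ref{area_calc} for each of these two random sets: containment in $\mathbb{B}$, almost-sure star-shapedness, and rotational invariance.

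For $I^\lambda$, containment in $\mathbb{B}$ is built into the definition (\ref{intersection_def_ball}). Star-shapedness about the origin follows from the stronger fact that $I^\lambda$ is convex, being an intersection of the convex sets $\mathbb{B}$ and $C+\mathbb{B}$ for $C \in \mathcal{C}_\lambda \subset \mathbb{B}$; each of these contains the origin, so $0 \in I^\lambda$ and every point of $I^\lambda$ is joined to $0$ by a line segment inside $I^\lambda$. Rotational invariance follows from that of the underlying Poisson process: for any $\Phi \in \mathrm{SO}(d)$, the uniform intensity measure on $\mathbb{B}$ is invariant under $\Phi$, so $\Phi(\mathcal{C}_\lambda) \stackrel{d}{=} \mathcal{C}_\lambda$, and since $\Phi(\mathbb{B}) = \mathbb{B}$ and $\Phi(C + \mathbb{B}) = \Phi(C) + \mathbb{B}$, we get $\Phi(I^\lambda) \stackrel{d}{=} I^\lambda$.

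The argument for $K^\lambda$ is parallel. Containment in $\mathbb{B}$ is immediate from (\ref{intersection_def}); convexity (hence star-shapedness about $0$) follows from $K^\lambda$ being an intersection of $\mathbb{B}$ with half-spaces $\mathbb{H}(\widetilde{C})$ each containing the origin. The one step that warrants a sentence is rotational invariance of the intensity measure $\lambda(\nu \times \sigma)$ on $\mathbb{B}$: writing points of $\mathbb{B}$ in polar coordinates $(p,\theta)$, the measure is the product of a radial part $\nu$ and the uniform surface measure $\sigma$ on $\mathbb{S}$, both of which are $\mathrm{SO}(d)$-invariant. Hence $\Phi(\widetilde{\mathcal{C}_\lambda}) \stackrel{d}{=} \widetilde{\mathcal{C}_\lambda}$, and since $\Phi$ maps the half-space $\mathbb{H}(\widetilde{C})$ to $\mathbb{H}(\Phi(\widetilde{C}))$, we obtain $\Phi(K^\lambda) \stackrel{d}{=} K^\lambda$.

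There is no real obstacle here; the corollary is a direct specialization of Proposition \ref{area_calc}, and both identities follow at once by invoking it with $S = R^\lambda$ and $S = Q^\lambda$ respectively, using that $\mathbb{E}[R^\lambda(\vec{e_1})^d]$ and $\mathbb{E}[Q^\lambda(\vec{e_1})^d]$ coincide with $\mathbb{E}(R^\lambda)^d$ and $\mathbb{E}(Q^\lambda)^d$ under the convention that $R^\lambda$, $Q^\lambda$ denote the common one-dimensional marginal distribution.
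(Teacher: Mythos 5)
Your proof is correct and matches the paper's approach exactly: the paper presents this corollary as an immediate application of Proposition \ref{area_calc} to $I^\lambda$ and $K^\lambda$ (with $S=R^\lambda$, $S=Q^\lambda$), giving no further argument. Your verification of the hypotheses (convexity hence star-shapedness, containment in $\mathbb{B}$, and rotational invariance of the respective intensity measures) is accurate and simply makes explicit what the paper leaves implicit.
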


\noindent We are now ready to compute $\E |I^\lambda|$.

\begin{proposition} \label{limit_area} $\lim_{\lambda \to \infty} \lambda^d \E |I^\lambda| = \lim_{\lambda \to \infty} \lambda^d \E |K^\lambda| = \frac{d! \omega_d}{\omega_{d-1}^d}$. \end{proposition}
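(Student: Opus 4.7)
The plan is to reduce both limits to a first-passage calculation for a single radial direction, using Corollary \ref{area_calc_cor}: since $\E|I^\lambda| = \omega_d \E(R^\lambda)^d$ and $\E|K^\lambda|=\omega_d\E(Q^\lambda)^d$, it suffices to show
\[
\lim_{\lambda\to\infty} \E\bigl(\lambda R^\lambda\bigr)^d \;=\; \lim_{\lambda\to\infty}\E\bigl(\lambda Q^\lambda\bigr)^d \;=\; \frac{d!}{\omega_{d-1}^d}.
\]
Both sides of this equation are the $d$th moment of an $\mathrm{Exp}(\omega_{d-1})$ random variable, so the identity I want to establish is that $\lambda R^\lambda$ and $\lambda Q^\lambda$ converge to $\mathrm{Exp}(\omega_{d-1})$ not just in distribution but in $L^d$.

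For the distributional convergence, I combine Proposition \ref{radius_distrib} with the local expansions from Proposition \ref{G_formula}. Proposition \ref{radius_distrib} says $\lambda\omega_d F(R^\lambda)\to\mathrm{Exp}(1)$, which forces $R^\lambda=O_P(\lambda^{-1})$. Substituting $F(r) = \tfrac{\omega_{d-1}}{\omega_d}r+O(r^3)$ gives
\[
\lambda\omega_d F(R^\lambda) = \lambda\omega_{d-1}R^\lambda + O\bigl(\lambda (R^\lambda)^3\bigr) = \lambda\omega_{d-1}R^\lambda + o_P(1),
\]
so by Slutsky $\lambda R^\lambda\to\mathrm{Exp}(\omega_{d-1})$ in distribution. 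The same argument, using $G(r)=\tfrac{\omega_{d-1}}{\omega_d}r + O(r^2)$, yields $\lambda Q^\lambda\to\mathrm{Exp}(\omega_{d-1})$.

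The main obstacle is upgrading from distributional convergence to convergence of the $d$th moment, since both sides of the target equality are moments that could in principle blow up. I would establish uniform integrability of $\{(\lambda R^\lambda)^d\}_\lambda$ and $\{(\lambda Q^\lambda)^d\}_\lambda$ via the exact tail bound of Corollary \ref{rad_dist_cor}, which shows that $\lambda\omega_d F(R^\lambda)$ and $\lambda\omega_d G(Q^\lambda)$ are each stochastically dominated by a standard $\mathrm{Exp}(1)$ variable. To transfer this to $R^\lambda$ and $Q^\lambda$ directly, I need a uniform linear lower bound $F(r)\ge c\,r$ (and likewise $G(r)\ge c\,r$) on all of $[0,1]$. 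This follows from a short geometric argument: both $F$ and $G$ are strictly increasing, continuous, and the ratios $F(r)/r,\ G(r)/r$ extend continuously to the positive value $\omega_{d-1}/\omega_d$ at $r=0$, so they are bounded below by some $c>0$ on $[0,1]$. Consequently $(\lambda R^\lambda)^d \le (c\omega_d)^{-d}\bigl(\lambda\omega_d F(R^\lambda)\bigr)^d$, which is stochastically dominated by $(c\omega_d)^{-d}E^d$ with $E\sim\mathrm{Exp}(1)$, giving a uniform $L^{d+1}$-bound and hence uniform integrability. The same dominates $(\lambda Q^\lambda)^d$.

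Putting everything together, the convergence in distribution combined with uniform integrability yields
\[
\lim_{\lambda\to\infty}\E(\lambda R^\lambda)^d = \lim_{\lambda\to\infty}\E(\lambda Q^\lambda)^d = \E E^d, \qquad E\sim\mathrm{Exp}(\omega_{d-1}),
\]
and a one-line gamma-function calculation gives $\E E^d = d!/\omega_{d-1}^d$. Multiplying by $\omega_d$ from Corollary \ref{area_calc_cor} produces the claimed common limit $d!\omega_d/\omega_{d-1}^d$.
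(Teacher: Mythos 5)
Your argument is correct and follows essentially the same route as the paper: reduce to the $d$th moment of the directional radius via Corollary \ref{area_calc_cor}, linearize $F$ and $G$ near $0$ to upgrade Proposition \ref{radius_distrib} to $\lambda R^\lambda,\lambda Q^\lambda\to_d\mathrm{Exp}(\omega_{d-1})$, and conclude by uniform integrability plus convergence of moments. Your global bound $F(r)\ge cr$ on $[0,1]$ is a clean (and if anything slightly more careful) way to get uniform integrability of the $d$th powers, where the paper instead localizes to the high-probability event $R^\lambda=O(\lambda^{-1/2})$ and invokes $R^\lambda\le C\,F(R^\lambda)$ for large $\lambda$.
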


\begin{proof} Here we carry out the details only for $I$; the analysis for $K$ is similar.
We aim to show that $F(R^{\lambda})$ is approximately equal to its linear term when $\lambda$ is large.
Let $c>0$ be any constant. By Proposition \ref{radius_distrib} and the definition of $F$,
\begin{align} \p(R^\lambda > c \lambda^{-1/2}) &= \p(c \lambda^{-1/2} \vec{e}_1 \in I^\lambda) \\
& = \exp(- \lambda \omega_d F(c \lambda^{-1/2})) \\
& \leq \exp\left(- \lambda \omega_{d-1} (c \lambda^{-1/2} - O(\lambda^{-3/2}))\right) \\
& = \exp(-O(\sqrt{\lambda})).
\end{align}
Proposition \ref{G_formula} now implies that for some constant $C > 0$,
\begin{equation} \p\left(\left|F(R^\lambda) - \frac{\omega_{d-1}}{\omega_d} R^\lambda\right| \geq C\lambda^{-3/2}\right)\to 0. \end{equation}
Consequently, for any $\eps > 0$,
\begin{equation} \label{lin_rad_approx} \p\left(\left|\lambda F(R^\lambda) - \lambda\frac{\omega_{d-1}}{\omega_{d}} R^\lambda\right|\geq\eps\right)\to 0. \end{equation}
\noindent Since convergence in probability implies convergence in distribution, Proposition \ref{radius_distrib} gives us that
\begin{equation} \lambda \omega_d \frac{\omega_{d-1}}{\omega_{d}} R^\lambda \to_d \text{\normalfont{Exp}}(1) \text{ as } \lambda \to \infty. \end{equation}

\noindent By Corollary \ref{rad_dist_cor}, the collection $\{\lambda F(R^\lambda)\}_{\lambda > 0}$ is uniformly integrable. By Proposition \ref{G_formula}, and since $R^\lambda \to 0$ a.s. as $\lambda \to \infty$, $R^{\lambda} \leq C \cdot F(R^\lambda)$ almost surely for $\lambda$ sufficiently large and some constant $C$. 
It follows that the collection $\{\lambda R^\lambda\}_{\lambda > 0}$ is also uniformly integrable. Together, convergence in distribution and uniform integrability imply convergence of moments (see Theorem 25.12 of~\cite{bill}, for example). Thus, as $\lambda \to \infty$,
\begin{equation} \lambda^d \E( R^\lambda)^d \to \left(\omega_d \cdot \frac{\omega_{d-1}}{\omega_{d}}\right)^{-d} \E[\text{Exp}(1)^d] = \frac{d!}{\omega_{d-1}^d}. \end{equation}

\noindent Combining this with Proposition \ref{area_calc} yields the result.

\end{proof}

\subsection{Formulas for general intensity measures} \label{general_intmodel}

We now give a generalization of these ideas for a wider class of intersection models, where the underlying Poisson process has arbitrary intensity measure, and the sets comprising the intersection are arbitrary. There are many possible ways to generalize this model: we chose a way that yields examples with the same flavor as $I^\lambda$ and $K^\lambda$, but with potentially different asymptotic behavior.

Consider a general Poisson point process $\cC_{\mu, \lambda}$ on $[0,1] \times \mathbb{S}$ with intensity measure $\lambda(\mu \times \sigma)$, where $\lambda > 0$, $\sigma$ is surface-area measure on $\mathbb{S}$, and $\mu$ is a probability measure on $[0,1]$.
For each $x \in \mathbb{B}, x \neq 0$, let $\text{SO}_x(d)$ denote the coset of the subgroup $\text{SO}_{\vec{e_1}}(d)$ of $\text{SO}(d)$ given by

\begin{equation} \text{SO}_x(d) = \left\{\Phi \in \text{SO}(d): \Phi(\vec{e_1}) = \frac{x}{\lVert x \rVert}\right\}. \end{equation}

\noindent Let $A \subset \R^d$ be a fixed closed convex set such that $\mathbb{B} \subset A$ and

\begin{equation}
\partial A \cap \partial \mathbb{B} \neq \emptyset.
\end{equation}

\noindent This assumption ensures that the intersection model almost surely 1) is non-empty for every $\lambda > 0$ and 2) converges to the set $\{0\}$ as $\lambda \to \infty$.

Note that $\text{SO}_{\vec{e_1}}(d)$ is a compact group, and hence $\text{SO}_x(d)$ has a unique Haar probability measure $\kappa_x$. For each $C \in \cC_{\mu, \lambda},$ sample a random element $\Phi_C \in \text{SO}_C(d)$ from $\kappa_C$. The generalized intersection model $I^{\mu, \lambda}_A$ is:

\begin{equation}
I^{\mu, \lambda}_A = \mathbb{B} \cap \left(\bigcap_{C \in \cC_{\mu, \lambda}} \Phi_C(A) + C \right),
\end{equation}

\noindent In words, $\Phi_C(A)$ is a copy of $A$ that we give a random spin and pin at a random point; and $I^{\mu, \lambda}_A$ is the intersection of a Poisson number of such independent copies. Clearly $I^{\mu, \lambda}_A$ is rotationally invariant in distribution.

\noindent Let $\rho$ and $\widehat{\nu}$ denote the probability measures $\rho(0,r) = r^d$ and $\widehat{\nu}(0,r) = 1 - (1-r)^d$ for $r \in (0,1)$. The boolean intersection models $I^\lambda$ and $K^\lambda$ are special cases of the general intersection model with respect to these measures, i.e.,

\begin{equation}
I^\lambda = I^{\rho, \lambda}_{\mathbb{B}}, \hspace{10pt} K^{\lambda} = I^{\widehat{\nu}, \lambda}_{\mathbb{H}},
\end{equation}

\noindent where $\mathbb{H} = \{x \in \R^d: x \cdot \vec{e_1} < 1\}$. Define the auxiliary function $F^\mu_A$ analogous to $F$ in (\ref{Gmu_def}): let $A^\mu = \Phi_C(A) + C$ be one of the randomly rotated and shifted copies of $A$ comprising the intersection model $I^{\mu, \lambda}_A$
where $C$ is drawn from the probability measure $\frac{1}{d \omega_d} \mu \times \sigma$, and let

\begin{equation}
F^{\mu}_A(r) = \p(r \vec{e}_1 \notin A^\mu).
\end{equation}

\noindent There is a simple generic formula in terms of $F^{\mu}_A$ for the expected volume of the intersection.

\begin{fact} \label{general_expectation} For any $\lambda >  0$,

\begin{equation}
\mathbb{E}\left|I^{\mu, \lambda}_A \right| = d \omega_d \int_0^\infty  r^{d-1} \exp(- \lambda \omega_d F^\mu_A(r)) \, dr.
\end{equation}
\end{fact}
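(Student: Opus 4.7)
The strategy is to reduce the volume to a moment of the single-direction radius via rotational invariance and Proposition~\ref{area_calc}, and then obtain the distribution of that radius from the Poisson void-probability formula.

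As noted just before the statement, $I^{\mu,\lambda}_A$ is rotationally invariant in distribution. Moreover, since $\mathbb{B}\subset A$, each summand $\Phi_C(A)+C$ is convex and contains the origin (because $-C\in \mathbb{B}\subset \Phi_C(A)$), so $I^{\mu,\lambda}_A$ is almost surely convex, and in particular star-shaped about $0$. Setting
$$R \;:=\; \sup\{t\in (0,1) : t\vec{e_1} \in I^{\mu,\lambda}_A\},$$
Proposition~\ref{area_calc} gives $\E|I^{\mu,\lambda}_A| = \omega_d\,\E[R^d]$.

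Next I would compute $\p(R>r)$ for $r\in(0,1)$ via the marked point process $\{(C,\Phi_C)\}$. The event $\{R>r\}$ is precisely the event that $r\vec{e_1}\in \Phi_C(A)+C$ for every marked atom. By the marking theorem, attaching the independent rotations $\Phi_C$ keeps the joint process Poisson, and the sub-process of ``bad'' atoms, namely those with $r\vec{e_1}\notin \Phi_C(A)+C$, is Poisson on $[0,1]\times\mathbb{S}$ with intensity
$$\lambda\,\p_\Phi\bigl(r\vec{e_1}\notin \Phi(A)+C\bigr)\, d(\mu\times\sigma)(C).$$
Unpacking the definition of $F^\mu_A$ (which is the same probability computed under the normalized product measure $\tfrac{1}{d\omega_d}(\mu\times\sigma)$) and applying the Poisson void-probability formula, one obtains
$$\p(R>r) \;=\; \exp\!\bigl(-\lambda\omega_d F^\mu_A(r)\bigr).$$

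Finally, the layer-cake identity $\E[R^d] = d\int_0^\infty r^{d-1}\p(R>r)\,dr$ (valid since $\p(R>r)=0$ for $r\ge 1$), combined with the two displays above, yields
$$\E|I^{\mu,\lambda}_A| \;=\; d\omega_d\int_0^\infty r^{d-1}\exp\!\bigl(-\lambda\omega_d F^\mu_A(r)\bigr)\, dr,$$
as claimed. No serious obstacle arises; the step requiring most care is the marking-theorem calculation, where the normalizations coming from the total mass of $\mu\times\sigma$ and from the definition of $F^\mu_A$ must be tracked carefully. Everything else is a routine application of standard Poisson-process tools together with Proposition~\ref{area_calc}.
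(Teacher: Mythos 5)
Your argument is correct and reaches the same key identity $\p(r\vec{e}_1\in I^{\mu,\lambda}_A)=\exp(-\lambda\omega_d F^\mu_A(r))$ that drives the paper's proof, but it is organized differently. The paper applies Robbins's formula directly: by Tonelli, $\E|I^{\mu,\lambda}_A|=\int_{\R^d}\p(x\in I^{\mu,\lambda}_A)\,dx$, after which one simply passes to spherical coordinates; no structural property of the random set beyond measurability is used. You instead route through Proposition~\ref{area_calc}, which obliges you to verify (as you do) that $I^{\mu,\lambda}_A$ is a.s.\ star-shaped about the origin and rotationally invariant, and you then recover the radial integral from the layer-cake identity $\E[R^d]=d\int_0^\infty r^{d-1}\p(R>r)\,dr$. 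Since $\p(R>r)$ and $\p(r\theta\in I^{\mu,\lambda}_A)$ agree up to a null set of radii, the two computations are a reshuffling of the same integral: the paper's version is marginally more economical and more general, while yours has the advantage of exhibiting the formula as a statement about the directional radius, the object actually used in Propositions~\ref{radius_distrib} and~\ref{general_radius_distrib}. One caveat that your own warning about normalizations brings to the surface: with the paper's literal conventions ($\sigma(\mathbb{S})=d\omega_d$, $\mu$ a probability measure, intensity $\lambda(\mu\times\sigma)$, and $F^\mu_A$ defined against the normalized measure $\tfrac{1}{d\omega_d}\mu\times\sigma$), the expected number of atoms excluding $r\vec{e}_1$ comes out as $\lambda\, d\,\omega_d\, F^\mu_A(r)$ rather than $\lambda\omega_d F^\mu_A(r)$; the stated formula (and the paper's proof) implicitly assumes the total intensity is $\lambda\omega_d$, as it is in the special cases $I^\lambda$ and $K^\lambda$. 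That factor-of-$d$ discrepancy is inherited from the statement's conventions, not introduced by your argument, but since you flagged this as the delicate step it would be worth making the normalization explicit.
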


\noindent The proof of Fact \ref{general_expectation} is a straightforward consequence of a well known formula for the volume of a random set (see for example~\cite{robbins}).

\begin{proof} Write $I = I^{\mu, \lambda}_A$. By Tonelli's theorem,
\begin{align} \E |I| &= \E \int_{\mathbb{R}^d} 1\{x \in I\} \, dx \\
& = \int_{\mathbb{R}^d} \p(x \in I) \, dx\\
& = \int_{\mathbb{R}^d} \exp(- \lambda \omega_d F^\mu_A(||x||))\, dx\\
&= \int_{\mathbb{S}} \int_{0}^1 \exp(-\lambda \omega_d F^\mu_A(r)) r^{d-1} \, dr\, d \sigma(\theta) \\
& = d \omega_d  \int_0^1  r^{d-1} \exp(-\lambda \omega_d F^\mu_A(r)) \, dr ,
\end{align}
as desired.
\end{proof}

For example, in dimension 2, using the exact equality (\ref{F_dim2formula}) and Fact \ref{general_expectation} gives the exact formula

\begin{equation}
\E|I^\lambda| = 2\pi \int_{0}^1 r \exp\left(-\lambda +\frac{\lambda}{\pi} \left( 2\arccos\left(\frac{r}{2}\right) - r \sqrt{1-\frac{r^2}{4}}\right) \right) \, dr.
\end{equation}

\noindent Unfortunately, this integral doesn't admit a simple asymptotic expansion in $\lambda$. The methods of Proposition \ref{area_calc}, on the other hand, do allow an easy and explicit calculation.

Continuing with the ideas of the previous section, define the radius of the intersection as in (\ref{radius_def}):

\begin{equation}
I^{\mu,\lambda}_A = \sup \{t \in (0,1): t \theta \in I^{\mu,\lambda}\}.
\end{equation}

\noindent The analog of Proposition \ref{radius_distrib} holds in general:

\begin{proposition} \label{general_radius_distrib} Let $A \subset \R^d$ be a fixed closed convex set such that $\mathbb{B} \subset A$ and
\begin{equation}
\partial A \cap \partial \mathbb{B} \neq \emptyset,
\end{equation}
and let $\mu$ be any probability measure on $[0,1]$. As $\lambda \to \infty$,
\begin{equation}
\lambda \omega_d \cdot F^\mu_A(R^{\mu, \lambda}_A) \to_d \text{\normalfont{Exp}}(1).
\end{equation}
\end{proposition}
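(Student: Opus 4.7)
The plan is to follow the proof of Proposition \ref{radius_distrib}: derive an exact closed-form for the tail of $R^{\mu,\lambda}_A$ via Poisson thinning, and then invert $F^\mu_A$ near $0$. The event $\{R^{\mu,\lambda}_A > r\}$ is precisely the event that $r\vec{e}_1 \in \Phi_C(A)+C$ for every $C\in \cC_{\mu,\lambda}$. Treating the rotations $\Phi_C\sim\kappa_C$ as independent marks on the Poisson atoms and applying Poisson thinning, the atoms whose copy of $A$ fails to cover $r\vec{e}_1$ form a Poisson process whose mean is proportional to
\[
\lambda \int_{[0,1]\times\mathbb{S}} \p_\Phi\bigl(r\vec{e}_1\notin\Phi(A)+c\bigr)\,d(\mu\times\sigma)(c),
\]
which by the definition of $F^\mu_A$ as an average against the probability measure $\tfrac{1}{d\omega_d}\mu\times\sigma$ equals $\lambda\omega_d F^\mu_A(r)$ up to the normalization constant fixed by the conventions of Section \ref{general_intmodel}. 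The void probability then yields the exact tail
\[
\p(R^{\mu,\lambda}_A > r) = \exp\bigl(-\lambda\omega_d F^\mu_A(r)\bigr).
\]

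Next I would verify the regularity of $F^\mu_A$ needed to invert it on an interval $[0,r_0]$. Since $\mathbb{B}\subset A$ and $\|C\|\leq 1$ imply $-C\in\mathbb{B}\subset\Phi_C(A)$, we have $0\in A^\mu$ a.s., so $F^\mu_A(0)=0$. Each $A^\mu$ is convex and contains $0$, hence $A^\mu\cap\{t\vec{e}_1 : t\geq 0\} = [0,T]\vec{e}_1$ for a random $T=T(A^\mu)$, and $\{r\vec{e}_1\notin A^\mu\}=\{r>T\}$ is monotone in $r$; continuity of $F^\mu_A$ follows as long as $T$ has no atoms, which is the generic case. The crucial input is $F^\mu_A(r)>0$ for every $r>0$, and this is where the hypothesis $\partial A\cap\partial\mathbb{B}\neq\emptyset$ enters. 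Without it, $\mathbb{B}(0,1+\delta)\subset A$ for some $\delta>0$, so every copy $A^\mu$ contains a ball of radius $\delta$ about $0$ and $F^\mu_A\equiv 0$ on $(0,\delta)$. Given a contact point $v\in\partial A\cap\partial\mathbb{B}$, a positive-measure family of rotations $\Phi$ and centers $C$ orients a copy of $A$ so that the image of $v$ sits near $\vec{e}_1$ with the common tangent hyperplane to $A$ and $\mathbb{B}$ at $v$ separating $r\vec{e}_1$ from that copy; this gives $F^\mu_A(r)>0$, and applying the same construction at smaller scales gives strict monotonicity near $0$.

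With $F^\mu_A$ invertible on $[0,r_0]$, the argument concludes exactly as in Proposition \ref{radius_distrib}: for any $z>0$ and $\lambda$ large enough that $z/(\lambda\omega_d)\leq F^\mu_A(r_0)$,
\[
\p\bigl(\lambda\omega_d F^\mu_A(R^{\mu,\lambda}_A) > z\bigr) = \p\bigl(R^{\mu,\lambda}_A > (F^\mu_A)^{-1}(z/(\lambda\omega_d))\bigr) = e^{-z},
\]
and sending $\lambda\to\infty$ yields the $\mathrm{Exp}(1)$ limit. The main obstacle is the geometric estimate in the previous paragraph: upgrading the single-point contact $\partial A\cap\partial\mathbb{B}\neq\emptyset$ to the quantitative positivity $F^\mu_A(r)>0$ for all $r>0$. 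A mild non-degeneracy on $\mu$ is also likely needed implicitly, since $\mu=\delta_0$ would make every copy of $A$ a rotation about the origin, forcing $F^\mu_A\equiv 0$ on $(0,1)$ in conflict with the conclusion.
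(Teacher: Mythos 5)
Your proposal is correct and takes essentially the same route as the paper, which gives no separate argument for Proposition \ref{general_radius_distrib} beyond remarking that the proof is "similar to that of Proposition \ref{radius_distrib}": you derive the exact void-probability tail $\p(R^{\mu,\lambda}_A>r)=\exp(-\lambda\omega_d F^\mu_A(r))$ and then invert $F^\mu_A$ near $0$, exactly as there. Your added observations — that the contact hypothesis $\partial A\cap\partial\mathbb{B}\neq\emptyset$ is what forces $F^\mu_A(r)>0$, and that a degenerate $\mu$ such as $\delta_0$ makes $F^\mu_A$ vanish identically on $(0,1)$ and so defeats the stated conclusion — are correct and flag a genuine (if minor) gap in the proposition's hypotheses that the paper glosses over.
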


\noindent The proof is similar to that of Proposition \ref{radius_distrib}. Combining Propositions \ref{area_calc} and \ref{general_radius_distrib} and mimicking the ideas of Proposition \ref{limit_area} gives a general program to compute the asymptotic scaling of the volume of $I^{\mu, \lambda}_A$. Below are two examples of intersection models that can be analyzed using this program.

\noindent \textbf{Example 1} Consider $I^{\rho, \lambda}_{\mathbb{H}}$ in dimension $d = 2$, i.e., the hyperplane intersection model $K^\lambda$, but where the underlying Poisson point process is uniform. In this case,

\begin{equation}
F^{\rho}_{\mathbb{H}}(r) =  \int_0^{\pi/2} r^2 \cos^2 \theta \, d\theta = \frac{\pi r^2}{4}.
\end{equation}

\noindent By Proposition \ref{general_radius_distrib},

\begin{equation}
\frac{\pi^2}{4}\lambda \cdot (R^{\rho, \lambda}_{\mathbb{H}})^2 \to \text{Exp}(1).
\end{equation}

\noindent Thus, by Proposition \ref{area_calc},

\begin{equation}
\lambda \E|I^{\rho, \lambda}_{\mathbb{H}}| = \pi \lambda \E (R^{\rho, \lambda}_{\mathbb{H}})^2 \to \pi \cdot \frac{4}{\pi^2} = \frac{4 }{\pi}.
\end{equation}

\noindent Note the difference in scale: $R^{\rho, \lambda}_{\mathbb{H}}$ is roughly $\lambda^{-1/2}$, while $Q^{\lambda} = R^{\widehat{\nu}, \lambda}_{\mathbb{H}}$ is roughly $\lambda^{-1}$. This limit scaling reflects the behavior of the densities of $\rho$ and $\widehat{\nu}$ near 0.

\noindent \textbf{Example 2} Fix $\beta \in (0,\pi)$, and define the cone set

\begin{equation} \text{Cone}(\vec{e_1}, \beta) = \left\{x \in \mathbb{R}^d: \arccos\left(\frac{\langle x, \vec{e_1} \rangle}{\lVert x \rVert} \right) < \beta \right\}. \end{equation}

\noindent In dimension $d = 2$, one can show that

\begin{equation} F^{\rho}_{\text{Cone}(\vec{e_1}, \beta)}(r) \approx C(\beta) r\text{ as } r \to 0 \end{equation}

\noindent for some constant $C(\beta)$, and thus Propositions \ref{general_radius_distrib} and \ref{area_calc} suggest

\begin{equation} \E|I^{\rho, \lambda}_{\text{Cone}(\vec{e_1}, \beta)}| \approx \frac{2}{\pi C(\beta)^2} \lambda^{-2}. \end{equation}

\noindent Unlike the ball and hyperplane models, however, the cone model does not have the Crofton cell $D_0^1$ as a scaling limit -- see Section \ref{sec:conclusion} for further discussion.

\section{Some geometric and probabilistic lemmas} \label{sec:lemmas} 

In this section, we state some basic lemmas needed to prove Theorem \ref{intersection_conv} via the coupling outlined in Section \ref{sec:outline}.
We begin with some notation. For any $\theta \in \mathbb{S}$, let Cone$(\theta, \delta)$ denote the cone set
\begin{equation}
{\rm Cone}(\theta, \delta) = \left\{x \in \mathbb{R}^d: \arccos\left(\frac{\langle x, \theta \rangle}{\lVert x \rVert}\right) < \delta\right\}.
\end{equation}
\noindent Consider the spherical and planar `caps'
\begin{equation}
{\rm Cap}(\theta, \delta)={\rm Cone}(\theta, \delta) \cap \partial \mathbb{B}
\end{equation}
\noindent and
\begin{equation}
{\rm Hyp}(\theta, \delta) = {\rm Cone}(\theta, \delta) \cap \{x: x \cdot \theta = 1\}.
\end{equation}
\noindent Write $d_H$ for Hausdorff distance, defined by
\begin{equation}
d_H(X,Y)=\max\left\{\sup_{x\in X}\inf_{y\in Y}||x-y||,\sup_{y\in Y}\inf_{x\in X}||x-y||\right\}.
\end{equation}
The first fact describes the boundary of the unit ball $\mathbb{B}$.

\begin{center}
	\begin{figure}
		\hspace{180pt} \includegraphics[scale=1]{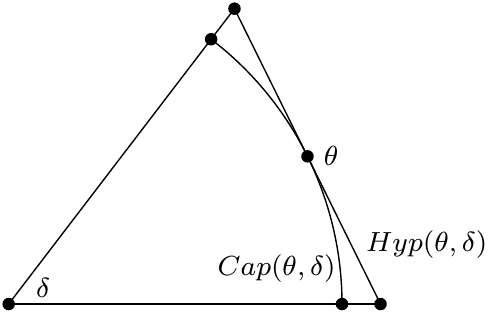}
		\caption{The sets ${\rm Cap}(x,y)$ and ${\rm Hyp}(x,y)$}
	\end{figure}
\end{center}

\begin{fact} \label{trig_lemma} For $\delta$ sufficiently small,
\begin{equation}
d_H({\rm Cap}(\theta, \delta), {\rm Hyp}(\theta, \delta)) =1-\cos\delta \leq \delta^2.
\end{equation}
\end{fact}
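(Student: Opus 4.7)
By rotational invariance of both sets under the stabilizer of $\theta$, I may assume $\theta = \vec{e_1}$. Parametrize the spherical cap as $\{u_\beta(\vec v) := (\cos\beta,\,\sin\beta\,\vec v) : \beta \in [0,\delta),\ \vec v \in \mathbb{S} \cap \{x_1 = 0\}\}$ and the planar cap as $\{p_t(\vec v) := (1,\,t\vec v) : t \in [0,\tan\delta),\ \vec v \in \mathbb{S} \cap \{x_1 = 0\}\}$. These two parametrizations are in natural radial correspondence through $t = \tan\beta$, and they share the common tip $\vec{e_1}$.

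The Hausdorff distance splits as the maximum of two one-sided suprema. \emph{Cap to Hyp.} For $u_\beta(\vec v) \in {\rm Cap}$, the orthogonal projection onto the affine hyperplane $\{x_1 = 1\}$ is $(1,\sin\beta\,\vec v)$, whose transverse norm $\sin\beta$ is strictly less than $\tan\delta$; hence the projection lies in ${\rm Hyp}$, and the Euclidean distance is $1 - \cos\beta \le 1 - \cos\delta$. \emph{Hyp to Cap.} For $p_t(\vec v) \in {\rm Hyp}$, the closest point of the entire unit sphere is the radial projection $p_t(\vec v)/\|p_t(\vec v)\| = (1,t\vec v)/\sqrt{1+t^2}$, which corresponds to angle $\beta = \arctan t < \delta$ and so lies in ${\rm Cap}$; the distance is $\sqrt{1+t^2} - 1 \le \sec\delta - 1$.

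Combining, $d_H({\rm Cap}(\vec{e_1},\delta),{\rm Hyp}(\vec{e_1},\delta)) \le \max(1-\cos\delta,\,\sec\delta - 1) = \sec\delta - 1$, the radial side dominating because the tangent-plane points lie exterior to the sphere. A Taylor expansion gives $\sec\delta - 1 = \tfrac12\delta^2 + O(\delta^4)$, so for $\delta$ sufficiently small this is at most $\delta^2$, which is exactly the inequality needed in Section~\ref{sec:coupling}. (The stated equality $d_H = 1 - \cos\delta$ matches only the perpendicular drop; the radial discrepancy $\sec\delta - 1$ is slightly larger, but asymptotically equal, so the $\le \delta^2$ bound is unaffected.)

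The only nontrivial step is checking that the two natural projections actually land inside ${\rm Hyp}$ and ${\rm Cap}$ respectively, rather than outside the prescribed angular range; this reduces in both directions to the elementary inequality $\sin\beta < \tan\delta$ (equivalently $\arctan t < \delta$) holding throughout the allowed parametrization. Once that is in hand, everything else is one-variable trigonometry, and I expect no real obstacle beyond keeping the parametrized ranges consistent.
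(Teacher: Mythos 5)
Your proof is correct, and it is more careful than the paper, which states this Fact without any proof. Your parametrizations of the two caps are right (the planar cap is the disk of radius $\tan\delta$ in $\{x\cdot\theta=1\}$), both nearest-point projections do land in the target sets as you verify, and so the two one-sided suprema are exactly $1-\cos\delta$ (Cap to Hyp) and $\sec\delta-1$ (Hyp to Cap). Your observation that the true Hausdorff distance is therefore $\sec\delta-1=\frac{1-\cos\delta}{\cos\delta}$, strictly larger than the $1-\cos\delta$ asserted in the statement, is a genuine (if harmless) correction to the paper: the stated equality accounts only for the perpendicular drop from the sphere to the tangent plane and misses the radial excess of the plane's rim over the sphere. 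Since $\sec\delta-1=\tfrac12\delta^2+O(\delta^4)\leq\delta^2$ for $\delta$ small (indeed for all $\delta\leq 1$, as $\cos\delta>\tfrac12$ there), the inequality $d_H\leq\delta^2$ --- which is the only thing used in Section~\ref{sec:coupling} --- survives intact. No gaps.
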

%
%
%
%
%

\noindent Write ${\rm Sh}^-(\eps)$ for the (inner) spherical shell
\begin{equation}
{\rm Sh}^-(\eps) = \{x \in \mathbb{R}^d: ||x|| \in (1-\eps, 1)\}.
\end{equation}

\noindent Similarly, write ${\rm Sh}^+(\eps)$ for the outer spherical shell
\begin{equation}
{\rm Sh}^-(\eps) = \{x \in \mathbb{R}^d: ||x|| \in (1, 1+\eps)\},
\end{equation}
and set ${\rm Sh}(\eps)={\rm Sh}^-(\eps)\cup\partial\mathbb{B}\cup{\rm Sh}^+(\eps)$.

\begin{fact} \label{sphere_partition} For any $\eps > 0$, there is a partition of ${\rm Sh}^-(\eps)$ into $K=K(d)$ sets $A_i = A_i(\eps),$ $i = 1, \ldots, K$, such that for any $K$ points $x_i \in A_i$, $i = 1, \ldots, K$, the component of the origin in the Boolean tessellation generated by copies of $\partial \mathbb{B}$ centered at the points $x_i$ is contained in $\mathbb{B}_{2\eps} = \{x:||x|| < 2\eps \}$, where $K$ does not depend on $\eps$.
\end{fact}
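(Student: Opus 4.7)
The plan is to slice the shell by angular direction, so that specifying one point per cell forces a sphere close to the origin in every radial direction. Fix a $\pi/6$-net $\{\theta^{(1)}, \ldots, \theta^{(K)}\}$ on $\mathbb{S}$, which exists with $K = K(d)$ depending only on dimension by a standard volume-packing argument. Let $S_1, \ldots, S_K$ be the Voronoi cells of this net (with ties broken arbitrarily); by the triangle inequality on $\mathbb{S}$, each cell has angular diameter at most $\pi/3$. Lift radially to obtain
\[
A_i = \{r\theta : \theta \in S_i, \; r \in (1-\eps, 1)\},
\]
which partitions ${\rm Sh}^-(\eps)$ into the required $K$ parts.

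Given any choice $x_i = r_i \theta_i \in A_i$, I claim that the origin's cell in the tessellation generated by $\{\partial \mathbb{B} + x_i\}_{i=1}^K$ lies inside $\mathbb{B}_{2\eps}$. To see this, fix a direction $\phi \in \mathbb{S}$ and choose the index $i$ for which $-\phi \in S_i$; then $\theta_i$ and $-\phi$ lie in the same cell, so $c := \langle \theta_i, -\phi \rangle \geq \cos(\pi/3) = 1/2$. Since the origin is strictly inside the unit ball around $x_i$, solving $\|t\phi - r_i \theta_i\|^2 = 1$ yields a unique positive root
\[
t = \frac{1 - r_i^2}{r_i c + \sqrt{r_i^2 c^2 + 1 - r_i^2}},
\]
which is the distance from the origin to the sphere $\partial \mathbb{B} + x_i$ along the ray through $\phi$.

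The core estimate is then $t < 2\eps$. After rearranging and squaring, the inequality reduces to $(1 - r_i)(1 + r_i) < 4\eps^2 + 4\eps r_i c$; using $1 - r_i < \eps$, $1 + r_i < 2$, $r_i > 1-\eps$, and $c \geq 1/2$ shows that the left side is strictly below $2\eps$ while the right side exceeds $2\eps + 2\eps^2$, establishing the bound. Consequently, in every direction $\phi$ some sphere crosses the ray from $0$ at distance less than $2\eps$, so any path from the origin inside the complement of $\bigcup_i (\partial \mathbb{B} + x_i)$ must hit a sphere before reaching norm $2\eps$, confining the origin's connected component to $\mathbb{B}_{2\eps}$.

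The main obstacle is producing the precise constant $2\eps$: the bound on $t$ is essentially tight, approached as $r_i \to 1-\eps$, $c \to 1/2$, and $\eps \to 0$, so the argument relies crucially on $c \geq 1/2$ and thus on cells of angular diameter at most $\pi/3$ --- exactly what the $\pi/6$-net supplies. Any relaxation of the angular bound would force a worse constant on the containing ball.
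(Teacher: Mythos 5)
Your proof is correct and follows essentially the same route as ours: partition ${\rm Sh}^-(\eps)$ into $K(d)$ radially lifted angular cells of diameter at most $\pi/3$, and show that in every direction some sphere meets the ray from the origin within distance $2\eps$; your exact computation of the radial exit distance (via the quadratic for $\|t\phi-r_i\theta_i\|=1$) replaces our tangent-hyperplane linearization and nicely confirms that the constant $2$ is tight. The one step to tighten is the last: ``every ray is blocked before $2\eps$'' does not by itself block every \emph{path} through a union of surfaces, but since each sphere $x_i+\partial\mathbb{B}$ separates the origin from infinity, the origin's component lies in the convex set $\bigcap_i (x_i+\mathbb{B})^\circ$, and for a convex set containing the origin the radial bound does imply containment in $\mathbb{B}_{2\eps}$.
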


For instance, when $d=2$, we may take $K(2)=6$, and make the $A_i$ consecutive thickened arcs of length $\pi/3$. To see this, assume for simplicity that all the points $x_i$ lie on the inner boundary of ${\rm Sh}^-(\eps)$, i.e., that
$||x_i||=1-\eps$. Then, with $K(2)=6$, the angle between two consecutive $x_i,x_j$ is at most $2\pi/3$, so that the circles $C_i=x_i+\partial \mathbb{B}$ and $C_j=x_j+\partial \mathbb{B}$ are both tangent to $C_{\eps}=\partial\mathbb{B}_{\eps} = \{x:||x|| = \eps \}$ at points $p_i,p_j$ on $C_{\eps}$ separated by an angle of less than $2\pi/3$. Replacing $C_i,C_j$ by lines $l_i,l_j$ tangent to $C_{\eps}$ at $p_i,p_j$, we see that $l_i$ and $l_j$ make an angle of at least $\pi/3$ enclosing $C_{\eps}$, so that their intersection lies in $\mathbb{B}_{2\eps}$; the same applies to the intersection of $C_i$ and $C_j$.

In higher dimensions, a similar argument applies. Once again, we may assume that the points $x_i$ all satisfy $||x_i||=1-\eps$. The $A_i$ will be thickened cap-like sets partitioning
${\rm Sh}^-(\eps)$ in such a way that the angle $x_iOx'_i$ formed by two points $x_i,x'_i\in A_i$ and the origin $O$ is small. Consequently, the corresponding angle formed by points $x_i,x_j$ in adjacent caps $A_i,A_j$ is also small. The spheres $S_i=x_i+\partial \mathbb{B}$ and $S_j=x_j+\partial \mathbb{B}$ are tangent to $S_{\eps}=\partial\mathbb{B}_{\eps} = \{x:||x|| = \eps \}$ at points $p_i,p_j$ as before; we replace $S_i,S_j$ by hyperplanes $\Pi_i,\Pi_j$, tangent to $S_{\eps}$ at $p_i,p_j$, whose unit normal vectors $n_i,n_j$ are such that $n_i\cdot n_j$ is small. This applies to points in every pair of adjacent caps $A_i,A_j$, so that the vertices of the polytope formed from the hyperplanes $\Pi_i$ all lie inside $\mathbb{B}_{2\eps}$.

Upper bounds on $K(d)$ can be obtained from classical results on covering the surface of a high-dimensional unit ball with small spherical caps; see, for instance, Theorem 6.3.1 of~\cite{Bor}.

We will also need three standard statistical estimates: one regarding the coupon collector process, and two related to Poisson random variables. The `coupon collector' refers to the following discrete time process. Given a finite collection of coupons, say $\{1, 2, \ldots, K\} = [K]$, we generate an iid sequence $(X_t)_{t \leq T}$ of $[K]$-valued random variables from any discrete distribution (not necessarily uniform). The process concludes at the stopping time $T$ when every coupon has been collected at least once:

\begin{equation}
T = \inf\{t \in \N: [K] \subset \{X_1, X_2, \ldots, X_t\}\}.
\end{equation}
Very precise results about the distribution of $T$ are known; see, for instance,~\cite{ER}. Here, we use a simple bound that is convenient for our purposes.

\begin{fact} \label{coupon_fact} Consider any coupon collector process on $K$ coupons, for $K=K(d)$ as in Fact \ref{sphere_partition}. Let $T$ denote the time it takes to collect all $K$ coupons, and let $a_*$ denote the minimum probability of selecting any of the coupons. Then for any $\lambda > 0$,

\begin{equation} \label{coupon_prob}
\p(T > \log \lambda) < K \lambda^{\log(1 - a_*)}.
\end{equation}

\end{fact}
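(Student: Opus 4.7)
The plan is a one-line union bound over the $K$ coupons, exploiting the independence of the draws. Let $a_i$ denote the probability of selecting coupon $i$ on a single draw, so that $a_i \geq a_*$ for every $i \in [K]$ and $\sum_i a_i = 1$. For a positive integer $n$, the event $\{T > n\}$ coincides with $\bigcup_{i=1}^K B_i(n)$, where $B_i(n) = \{\text{coupon } i \text{ does not appear in } X_1,\ldots,X_n\}$. Since the $X_t$ are i.i.d., $\p(B_i(n)) = (1-a_i)^n \leq (1-a_*)^n$, and the union bound yields
\[
\p(T > n) \;\leq\; K(1-a_*)^n.
\]

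To put this in the stated form, I would take $n = \lfloor \log\lambda \rfloor$ (using that $T$ is integer-valued to replace $\log\lambda$ by its floor inside the event) and apply the algebraic identity
\[
(1-a_*)^{\log\lambda} \;=\; \exp\bigl(\log(1-a_*)\cdot \log\lambda\bigr) \;=\; \lambda^{\log(1-a_*)}.
\]
Because $a_* > 0$, the exponent $\log(1-a_*)$ is strictly negative, so this is a genuine polynomial decay. The only bookkeeping subtlety is that $\lfloor\log\lambda\rfloor \geq \log\lambda - 1$ introduces a multiplicative factor of at most $(1-a_*)^{-1}$; this is absorbed by the strict inequality in the statement (and is anyway a harmless dimensional constant since $a_*$ depends only on $d$).

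There is really no substantive obstacle here: the fact is simply packaging the standard coupon-collector tail bound at the scale at which it will be invoked in the coupling argument of Section \ref{sec:coupling}, namely $t = \log\lambda$, where polynomial decay of $\p(T > \log\lambda)$ will be exactly what is needed to show that the Poisson points near $\partial\mathbb{B}$ are asymptotically almost surely spread out enough across the partition $\{A_i\}$ from Fact \ref{sphere_partition} to constrain $I^\lambda$ to a small ball.
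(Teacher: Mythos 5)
Your proof is correct and is essentially the paper's own argument: a union bound (via Markov's inequality on the sum of indicators) over the $K$ events that a given coupon is missed through time $t$, each bounded by $(1-a_*)^t \leq (1-a_*)^{\log\lambda} = \lambda^{\log(1-a_*)}$. The only difference is that you explicitly address the non-integrality of $\log\lambda$, which the paper glosses over; just note that the resulting factor of $(1-a_*)^{-1}$ from taking the floor is not literally ``absorbed by the strict inequality'' (it exceeds $1$), though this is immaterial since only the polynomial decay in $\lambda$ is ever used.
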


We will apply Fact \ref{coupon_fact} in the following way: we collect coupon $i$ when a point of a Poisson process lands in a region $A_i$. Note that, since the sets $A_i$ appearing in Fact \ref{sphere_partition} do not necessarily have equal volume, the coupon collector process considered in Fact \ref{coupon_fact} is not necessarily uniform. In any case, we can construct the $A_i$ so that $a_* > 0$, and thus the probability in (\ref{coupon_prob}) goes to 0 as $\lambda \to \infty$.

\begin{proof}[Proof of Fact \ref{coupon_fact}] Let $a_1, a_2, \ldots, a_K$ denote the probabilities of selecting the $K$ coupons respectively, and

\begin{equation} a_* = \min \{a_1, a_2, \ldots, a_K\}. \end{equation}

\noindent Let $V_i(t)$ denote the indicator random variable of the event that coupon $i$ has not been collected by time $t$, i.e.,

\begin{equation}
V_i(t) = {\bf  1}_{\{i \notin \{X_1, X_2, \ldots, X_t\}\}}.
\end{equation}

\noindent Note that

\begin{equation} \E V_i(t) = (1-a_i)^t. \end{equation}

\noindent By Markov's inequality,

\[
\p(T > t)  = \p\left(\sum_{i=1}^K V_i(t) \ge 1\right)
 \leq \sum_{i=1}^K \E V_i(t)
 =  \sum_{i=1}^K (1-a_i)^t
 \leq K(1-a_*)^t.
\]

\noindent Setting $t = \log \lambda$ yields the result.

\end{proof}

\begin{proposition}\label{Poisson_TV} Fix any $\mu, \delta > 0$, and let $N \sim \text{\normalfont{Poi}}(\mu)$, $N' \sim \text{\normalfont{Poi}}(\mu + \delta)$. Then

\begin{equation}
{\rm TV}(N, N') \leq \delta,
\end{equation}

\noindent where ${\rm TV}$ denotes total variation distance between probability distributions.

\end{proposition}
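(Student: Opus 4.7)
The plan is to use the standard coupling characterization of total variation distance: for any two real-valued random variables $X, Y$,
\[
\mathrm{TV}(X, Y) = \inf \P(\hat{X} \neq \hat{Y}),
\]
where the infimum runs over all couplings $(\hat{X}, \hat{Y})$ of $X$ and $Y$. Any particular coupling therefore yields an upper bound on $\mathrm{TV}$, so it suffices to exhibit a single coupling of $N$ and $N'$ under which $\P(\hat{N} \neq \hat{N}')$ is at most $\delta$.

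The natural coupling here exploits the additivity of independent Poisson variables. Specifically, I would let $M \sim \mathrm{Poi}(\delta)$ be independent of $N$, and set $\hat{N} = N$ and $\hat{N}' = N + M$. By the standard fact that the sum of independent Poisson variables is Poisson with parameter equal to the sum of the rates, $\hat{N}' \sim \mathrm{Poi}(\mu + \delta)$, so $(\hat{N}, \hat{N}')$ is a valid coupling of $N$ and $N'$.

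Under this coupling, $\hat{N} \neq \hat{N}'$ exactly when $M \geq 1$, an event of probability
\[
\P(M \geq 1) = 1 - e^{-\delta} \leq \delta,
\]
where the final inequality is the elementary bound $1 - e^{-x} \leq x$ for $x \geq 0$. Combining this with the coupling characterization gives $\mathrm{TV}(N, N') \leq \delta$.

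There is essentially no obstacle to this argument — it is a one-line coupling once the additivity of Poissons and the coupling characterization of $\mathrm{TV}$ are invoked. The only thing to be mindful of is that we need the sharp form $1 - e^{-\delta} \leq \delta$ rather than just $1 - e^{-\delta} = O(\delta)$, since the statement claims a clean bound with constant $1$; this is immediate from convexity of $e^{-x}$ or from the Taylor expansion.
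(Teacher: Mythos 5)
Your proof is correct. The paper does not actually prove this proposition; it simply cites Adell and Jodr\'a, who compute the total variation distance between $\mathrm{Poi}(\mu)$ and $\mathrm{Poi}(\mu+\delta)$ exactly. Your additive coupling $\hat N = N$, $\hat N' = N + M$ with $M \sim \mathrm{Poi}(\delta)$ independent of $N$ is the standard self-contained route: it is valid because sums of independent Poissons are Poisson, the coupling inequality ${\rm TV}(N,N') \leq \P(\hat N \neq \hat N')$ needs only the easy direction of the coupling characterization, and $\P(M \geq 1) = 1 - e^{-\delta} \leq \delta$ follows from convexity of $e^{-x}$. In fact your argument yields the slightly stronger bound ${\rm TV}(N,N') \leq 1 - e^{-\delta}$, which is more than the paper needs (it only uses ${\rm TV}(N,N') \leq c\lambda\eps^2$ downstream). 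The one technical point worth being aware of is that the coupling inequality bounds the total variation distance between the \emph{laws} of $N$ and $N'$, which is exactly what the proposition asserts; your use of it is correct.
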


\noindent See for instance~\cite{adell}.

\begin{fact} \label{poi_tail} Let $Y \sim \text{\normalfont{Poi}}(V_\eps \lambda),$ where $V_\eps$ is the volume of {\rm Sh}$(\eps_\lambda)$, and $\eps_\lambda = \lambda^{-1}\log^2 \lambda$. Then
\begin{equation}
\p(Y < \log \lambda) < \lambda^{-1},
\end{equation}
\noindent for sufficiently large $\lambda$.
\end{fact}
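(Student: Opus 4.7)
The plan is to observe that $V_\eps \lambda$ grows like $\log^2 \lambda$, so the threshold $\log \lambda$ lies far below the mean of $Y$, and a standard Chernoff bound for the lower tail of a Poisson then yields a decay much faster than $\lambda^{-1}$.

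First I would compute $V_\eps$. Since ${\rm Sh}(\eps_\lambda)$ is the shell $\{x : 1-\eps_\lambda < \|x\| < 1+\eps_\lambda\}$ (together with the measure-zero set $\partial \mathbb{B}$), its volume is
\[
V_\eps = \omega_d\bigl((1+\eps_\lambda)^d - (1-\eps_\lambda)^d\bigr) = 2d\omega_d\,\eps_\lambda + O(\eps_\lambda^3).
\]
Plugging in $\eps_\lambda = \lambda^{-1}\log^2 \lambda$ gives
\[
\mu := V_\eps\lambda = 2d\omega_d\log^2\lambda + O(\lambda^{-2}\log^6\lambda),
\]
so for $\lambda$ sufficiently large one has $\mu \geq d\omega_d \log^2\lambda$, which dominates $\log \lambda$.

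Next I would invoke the classical Chernoff bound for a Poisson: for $Y \sim \Poi(\mu)$ and $0 < k < \mu$,
\[
\p(Y \leq k) \leq \exp\bigl(-\mu + k + k\log(\mu/k)\bigr),
\]
obtained by optimizing $\E e^{-tY} = e^{\mu(e^{-t}-1)}$ at $t = \log(\mu/k)$. Taking $k = \log \lambda$ and inserting $\mu \geq d\omega_d \log^2\lambda$, the exponent is at most
\[
-d\omega_d\log^2\lambda + \log\lambda + \log\lambda \cdot \log\bigl(2d\omega_d\log\lambda\bigr) = -d\omega_d\log^2\lambda + O(\log\lambda \cdot \log\log\lambda),
\]
which is below $-\log\lambda$ for all sufficiently large $\lambda$. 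Hence $\p(Y < \log \lambda) \leq \p(Y \leq \log \lambda) < \lambda^{-1}$, as desired.

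There is essentially no obstacle here: the specific choice $\eps_\lambda = \lambda^{-1}\log^2\lambda$ is engineered precisely to make the Poisson mean $\mu$ polylogarithmic in $\lambda$ and comfortably exceed the threshold $\log \lambda$, so any Poisson tail bound sensitive enough to resolve the polylog-vs-log gap (for instance the slightly cruder form $\p(Y \le k) \leq e^{-\mu}(e\mu/k)^k$) will suffice.
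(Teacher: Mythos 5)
Your proof is correct and matches the paper's intent: the paper gives no argument of its own for this fact, deferring entirely to Canonne's note on Poisson concentration, which contains precisely the Chernoff lower-tail bound $\p(Y \le k) \le e^{-\mu}(e\mu/k)^k$ that you apply. Your computation of $V_\eps\lambda \asymp \log^2\lambda$ and the resulting exponent $-c\log^2\lambda + O(\log\lambda\,\log\log\lambda) \ll -\log\lambda$ is exactly the verification the authors leave to the reader.
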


\noindent See~\cite{ccanonne}, for example.

\medskip

\section{Coupling the intersection and tessellation models} \label{sec:coupling} 

The goal of this section is to prove our main convergence result, Theorem \ref{intersection_conv}. We carry out the argument for the intersection model with balls, by coupling it to the tessellation model with spheres. Under this coupling, the Hausdorff distance between the two models tends to 0 as $\lambda \to \infty$, even after re-scaling both sets by $\lambda$.

The coupling works as follows. Let $\mathcal{X} = \mathcal{X}_\lambda$ be a Poisson point process in $\mathbb{R}^d$ of intensity $\frac{\lambda}{2}$, and consider the set
\begin{equation} \label{tess_def}
\bigcup_{x \in \mathcal{X}} (x + \partial \mathbb{B}).
\end{equation}
\noindent Let $J^\lambda$ be the connected component of the origin in the resulting tessellation. Also, let $I^\lambda$ denote the ball intersection model on $\mathbb{B}$, that is,
\begin{equation}
I^\lambda = \bigcap_{C \in \mathcal{C}_\lambda} (C + \mathbb{B}),
\end{equation}
\noindent where $\mathcal{C}_\lambda$ is a Poisson point process on $\mathbb{B}$ of intensity $\lambda$. The reason that $I^{\lambda}$ comes from a point process of intensity twice that of $J^{\lambda}$ is that only points inside $\mathbb{B}$ contribute to $I^{\lambda}$,
while points both inside and outside $\mathbb{B}$ will contribute to the shape of $J^{\lambda}$. Recalling notation from the previous section, define
\begin{equation}
\mathcal{X}^\eps = \mathcal{X} \cap \mathrm{ Sh}(\eps),
\end{equation}
\noindent and consider the `restricted' model $J^\lambda_{\eps}$, the connected component of the origin in the tessellation induced by
\begin{equation} \label{jeps_def}
\bigcup_{x \in \mathcal{X}^\eps} (x + \partial \mathbb{B}).
\end{equation}
\noindent Also, let $\mathcal{C}_\lambda^\eps = C_\lambda \cap \mathrm{ Sh}(\eps) = C_\lambda \cap \mathrm{ Sh}^-(\eps) = \{C \in \mathcal{C}_\lambda: ||C|| \in (1-\eps, 1)\}$, and define
\begin{equation}
I_\eps^\lambda = \bigcap_{C \in \mathcal{C}_\lambda^\eps} (C_i + \mathbb{B}).
\end{equation}
\noindent For $\eps$ chosen appropriately, the restricted models are not far from the original models. Precisely:

\begin{proposition} \label{boundary_restrict} Set $\eps = \eps_\lambda = \frac{\log^2 \lambda}{\lambda}$. As $\lambda \to \infty$,
\begin{equation}
\p(J^\lambda_{2\eps} = J^\lambda) \to 1 \text{ and } \p(I^\lambda_{2\eps} = I^\lambda) \to 1.
\end{equation}
\noindent Moreover, with probability tending to 1, $J^{\lambda} \subset \mathbb{B}_{2\eps}$ and $I^\lambda \subset \mathbb{B}_{2\eps}$.
\end{proposition}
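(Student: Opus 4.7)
The plan is to first establish, with probability tending to $1$, that $I^\lambda$, $I^\lambda_{2\eps}$, $J^\lambda$, and $J^\lambda_{2\eps}$ are all contained in $\mathbb{B}_{2\eps}$, and then argue that the spheres (resp.\ balls) omitted in the restricted models cannot affect those sets.

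First I would apply Fact \ref{sphere_partition} at scale $\eps=\eps_\lambda$, producing a partition $A_1,\ldots,A_K$ of ${\rm Sh}^-(\eps)$. The cardinalities $|\mathcal{C}_\lambda\cap {\rm Sh}^-(\eps)|$ and $|\mathcal{X}\cap {\rm Sh}^-(\eps)|$ are each Poisson with mean of order $\lambda\eps_\lambda=\log^2\lambda$, so Fact \ref{poi_tail} gives that each is at least $\log\lambda$ with probability $1-O(\lambda^{-1})$. Conditional on the counts, those points are iid uniform on ${\rm Sh}^-(\eps)$, and recording which $A_i$ each point falls into is a coupon-collector process whose per-coupon probabilities are bounded below by a positive constant $a_*$ (the $A_i$ can be constructed with comparable volumes). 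Fact \ref{coupon_fact} then gives that all $K$ coupons are collected within $\log\lambda$ trials with probability $1-O(\lambda^{\log(1-a_*)})\to 1$. On this good event every $A_i$ contains at least one point of each process, and Fact \ref{sphere_partition} forces the cell of the origin in the sphere tessellation generated by those points to lie inside $\mathbb{B}_{2\eps}$. Since adding more spheres only shrinks the cell, the same holds for $J^\lambda$, and for $J^\lambda_{2\eps}$ since ${\rm Sh}^-(\eps)\subset{\rm Sh}(2\eps)$. For the intersection models I would observe that when all centers $C_i$ lie in $\mathbb{B}$ the convex intersection $\bigcap_i(C_i+\mathbb{B})$ is contained in the origin-cell of the corresponding sphere tessellation --- any segment from $0$ to a point of the intersection stays strictly inside every ball and so crosses no sphere --- hence $I^\lambda\subset\mathbb{B}_{2\eps}$ and $I^\lambda_{2\eps}\subset\mathbb{B}_{2\eps}$ on the same event.

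Next I would deduce the two equalities. A unit sphere $x+\partial\mathbb{B}$ meets $\mathbb{B}_{2\eps}$ if and only if $\bigl|\|x\|-1\bigr|<2\eps$, i.e., iff $x\in{\rm Sh}(2\eps)$. If $x\notin{\rm Sh}(2\eps)$ then $x+\partial\mathbb{B}$ is disjoint from $\mathbb{B}_{2\eps}$, so adjoining this sphere to any tessellation whose origin-cell lies in $\mathbb{B}_{2\eps}$ leaves that cell unchanged; this gives $J^\lambda=J^\lambda_{2\eps}$ on the event $\{J^\lambda_{2\eps}\subset\mathbb{B}_{2\eps}\}$. For the intersection, if $C\in\mathbb{B}$ with $\|C\|\le 1-2\eps$, then every $y\in\mathbb{B}_{2\eps}$ satisfies $\|y-C\|\le 2\eps+(1-2\eps)=1$, so $\mathbb{B}_{2\eps}\subset C+\mathbb{B}$; intersecting $I^\lambda_{2\eps}\subset\mathbb{B}_{2\eps}$ with such a ball has no effect, yielding $I^\lambda=I^\lambda_{2\eps}$. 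Taking the intersection of the high-probability events from the two steps proves the proposition.

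The main subtlety is calibrating the two length scales: Fact \ref{sphere_partition} applied at scale $\eps$ yields a cell in $\mathbb{B}_{2\eps}$, and the factor-of-two margin is exactly what drives both irrelevance arguments --- any sphere whose center lies more than $2\eps$ from $\partial\mathbb{B}$ cannot touch $\mathbb{B}_{2\eps}$, and any ball with $\|C\|\le 1-2\eps$ already contains $\mathbb{B}_{2\eps}$. The choice $\eps_\lambda=\lambda^{-1}\log^2\lambda$ is dictated by a balancing of two requirements: the Poisson count in ${\rm Sh}^-(\eps_\lambda)$ must grow fast enough (here, as $\log^2\lambda$) so that the coupon-collector tail $K(1-a_*)^{\log\lambda}$ beats the union bound, while the scaled margin $\lambda\cdot 2\eps_\lambda=2\log^2\lambda$ must remain small enough to be absorbed by the Hausdorff-distance coupling of Section \ref{sec:coupling}.
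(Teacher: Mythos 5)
Your proof is correct and follows essentially the same route as the paper's: Fact \ref{sphere_partition} combined with the coupon-collector bound (Fact \ref{coupon_fact}) and the Poisson tail bound (Fact \ref{poi_tail}) to confine the origin cell to $\mathbb{B}_{2\eps}$, after which spheres and balls centered far from $\partial\mathbb{B}$ are geometrically irrelevant. You in fact supply more detail than the paper at two points it leaves implicit --- the containment of the convex intersection in the origin cell of the associated sphere tessellation (handling the $I^\lambda$ case, which the paper dismisses as ``similar'') and the explicit $2\eps$ margin computations --- so no gaps.
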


This is essentially the statement that only points near $\partial \mathbb{B}$ contribute to the component of the origin.

\begin{proof} We will prove the assertion for $J^{\lambda}$; the proof for $I^{\lambda}$ is similar. The main ingredients are Fact \ref{sphere_partition} and Fact \ref{coupon_fact}. Fix any $\lambda > 0$, and let $\{A_i(\eps_\lambda)\}_{i=1}^K$ denote the sets given by Fact \ref{sphere_partition}. Consider the stopping time
\begin{equation}
\lambda^{\text{cov}} = \min \{\eta: \mathcal{X}_\eta \cap A_i(\eps_\lambda) \neq \emptyset, {\rm \ for\ }i = 1, 2, \ldots, K \}.
\end{equation}
\noindent We regard the Poisson point processes $\mathcal{X} = \mathcal{X}_\eta$ as nested, i.e., they are coupled so as to form a non-decreasing sequence of sets as $\eta$ increases. This makes $\lambda^{\text{cov}}$ is a well-defined stopping time.

Fix $\eta > \lambda^{\text{cov}}$. By Fact \ref{sphere_partition}, $J^\eta \subset \mathbb{B}_{2\eps_\lambda}$. It follows that any point $y \in \mathcal{X}_\eta$ $\cap $ Sh$(2\eps_\lambda)^c$ does not contribute to $J^\eta$, i.e., $y + \partial \mathbb{B}$ does not lie on the boundary of $J^\eta$. Thus $J^\eta_{2\eps_\lambda} = J^\eta$ in this case.

To finish the proof, it suffices to show that

\begin{equation}
\p(\lambda^{\text{cov}} < \lambda) \to 1 \text{ as } \lambda \to \infty.
\end{equation}

\noindent For the above event to occur, the point process $\mathcal{X}_\lambda$ must have at least one point in each set $A_i(\eps_\lambda)$. It is enough to have $\log \lambda$ points of $\mathcal{X}_\lambda$ in $\mathcal{X}_\lambda^\eps$ -- then by Fact \ref{coupon_fact}, each $A_i$ will have at least one point with high probability. The number of such points is Poisson with mean $V_\eps \cdot \lambda$. By Fact \ref{poi_tail} and a union bound, and noting that $a_*>0$,
\begin{equation}
\p(\lambda^{\text{cov}} \geq \lambda) \leq K \lambda^{\log(1-a_*)} + \lambda^{-1} \to 0 \text{ as } \lambda \to \infty.
\end{equation}
\end{proof}

Additionally, the two restricted models are close to each other.

\begin{proposition} \label{coupling_prop} The random sets $I^\lambda, I_\eps^\lambda, J^\lambda$ and $J_\eps^\lambda$ can be constructed on the same probability space such that as $\lambda \to \infty$, with $\eps = \eps_\lambda/2 = \frac{\log^2 \lambda}{2 \lambda}$ as in Proposition \ref{boundary_restrict},
\begin{equation}
\p(d_H(\lambda I_\eps^\lambda, \lambda J_\eps^\lambda) \leq C\lambda^{-1/4}) \to 1,
\end{equation}
\noindent for some global constant $C > 0$.
\end{proposition}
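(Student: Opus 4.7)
\emph{Plan.} The coupling is built around a bijective matching of points: for each tessellation point $x = r\theta \in \mathcal{X}_\lambda^\eps$, match it with the intersection point $\phi(x) = x$ if $r < 1$ and $\phi(x) = (r-2)\theta$ if $r > 1$. The geometric motivation is that, for every matched pair, the sphere $x + \partial\mathbb{B}$ and the ball boundary $\partial(\phi(x) + \mathbb{B})$ share a common tangent hyperplane at the closest-to-origin point $p(x) = (\|x\| - 1)\,x/\|x\|$, and both locally bound the same half-space containing the origin. When $\|x\| < 1$ the two surfaces coincide exactly; when $\|x\| > 1$ they sit on opposite sides of this tangent hyperplane as mirror-image concave/convex surfaces. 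Thus the ``tangent polytope" that approximates $I_\eps^\lambda$ coincides with the one approximating $J_\eps^\lambda$.

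\emph{Step 1: coupling the Poisson processes.} Write $\mathcal{X}_\lambda^\pm = \mathcal{X}_\lambda \cap \mathrm{Sh}^\pm(\eps)$. A polar-coordinate Jacobian calculation shows that the pushforward $\phi(\mathcal{X}_\lambda^+)$ is a Poisson process on $\mathrm{Sh}^-(\eps)$ with Lebesgue density $(\lambda/2)((2-s)/s)^{d-1}$ at the point $s\omega$. Superposing with $\mathcal{X}_\lambda^-$ (of density $\lambda/2$) gives a Poisson process of density $(\lambda/2)(1 + ((2-s)/s)^{d-1})$, which differs from the intersection density $\lambda$ by $O(\lambda(1-s))$. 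The $L^1$ discrepancy over $\mathrm{Sh}^-(\eps)$ is therefore $O(\lambda \eps^2) = O(\log^4\lambda/\lambda)$. Using the standard ``common part plus independent excess" coupling of Poisson processes (in the same spirit as Proposition \ref{Poisson_TV}), I can realize $\mathcal{C}_\lambda^\eps$ and $\mathcal{X}_\lambda^- \cup \phi(\mathcal{X}_\lambda^+)$ jointly so that the two point sets agree except on an excess of expected size $O(\log^4\lambda/\lambda) \to 0$. Labelling each matched point with an independent inner/outer type and unfolding the outer ones by $\phi^{-1}$ extends this into the desired joint coupling of $\mathcal{C}_\lambda^\eps$ and $\mathcal{X}_\lambda^\eps$.

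\emph{Step 2: Hausdorff comparison.} Work on the intersection of the coupling-success event from Step 1 with the event (from Proposition \ref{boundary_restrict}) that $I_\eps^\lambda, J_\eps^\lambda \subset \mathbb{B}_{2\eps}$; this intersection has probability tending to $1$. For each matched pair let $H(x)$ denote the closed half-space bounded by the common tangent hyperplane through $p(x)$ and containing the origin, and set $P_\lambda = \bigcap_x H(x)$. A direct quadratic expansion (essentially Fact \ref{trig_lemma} applied to caps of radius at most $2\eps$) shows that, inside $\mathbb{B}_{2\eps}$, both $\partial(\phi(x)+\mathbb{B})$ and $x + \partial\mathbb{B}$ lie within perpendicular distance $O(\eps^2)$ of the tangent hyperplane through $p(x)$. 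Because the tangent hyperplanes of matched pairs coincide exactly, both $I_\eps^\lambda$ and $J_\eps^\lambda$ are sandwiched between $P_\lambda \cap \mathbb{B}_{2\eps}$ and its $O(\eps^2)$-neighborhood, so $d_H(I_\eps^\lambda, J_\eps^\lambda) = O(\eps^2)$. Scaling by $\lambda$ gives $d_H(\lambda I_\eps^\lambda, \lambda J_\eps^\lambda) = O(\lambda \eps^2) = O(\log^4\lambda/\lambda)$, which is less than $C \lambda^{-1/4}$ for any $C > 0$ once $\lambda$ is large enough.

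\emph{Main obstacle.} The subtlest point is the passage in Step 2 from ``each matched boundary piece is $O(\eps^2)$-close to its tangent hyperplane" to ``the two intersections are $O(\eps^2)$-close in Hausdorff distance." A priori, the piece that determines the boundary of $I_\eps^\lambda$ in a given angular direction may come from a different matched point than the one determining $J_\eps^\lambda$ in the same direction, so a face-by-face comparison does not suffice. The cure is the common polyhedral proxy $P_\lambda$: every supporting hyperplane of $P_\lambda$ lies within $O(\eps^2)$ of both the corresponding ball-boundary piece and the corresponding sphere-boundary piece, uniformly in direction. Making this uniform, and in particular controlling corners of $P_\lambda$ where several supporting hyperplanes meet so that no direction escapes the $O(\eps^2)$ envelope, is the part of the argument that requires the most care.
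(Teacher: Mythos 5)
Your Step 1 is a legitimate coupling, and it is genuinely different from the paper's: you use a maximal (total-variation) coupling so that $\mathcal{C}_\lambda^\eps$ and $\mathcal{X}_\lambda^-\cup\phi(\mathcal{X}_\lambda^+)$ \emph{coincide} with probability $1-O(\lambda\eps^2)$, whereas the paper matches the two processes point-by-point (matching angles exactly and coupling the radii via the optimal transport map $H'\circ H^{-1}$), accepting a residual displacement $|R-R'|\leq\lambda^{-3/2}$ per point. Your density computation $\frac{\lambda}{2}\bigl(1+((2-s)/s)^{d-1}\bigr)=\lambda(1+O(1-s))$ and the resulting $O(\lambda\eps^2)$ discrepancy are correct, and your version has the advantage that on the success event the matched tangent hyperplanes agree exactly, so the only source of error is the $O(\eps^2)$ curvature deviation.

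The gap is in Step 2, and it sits exactly where you flag it in your ``main obstacle'' paragraph: you name the corner problem but do not solve it, and without solving it the conclusion does not follow. From ``each constraint is sandwiched between $H(x)^{-\delta}$ and $H(x)^{+\delta}$ with $\delta=O(\eps^2)$'' you may conclude $\bigcap_x H(x)^{-\delta}\subset I_\eps^\lambda\subset\bigcap_x H(x)^{+\delta}$ (and similarly for $J_\eps^\lambda$), but $\bigcap_x H(x)^{+\delta}$ is \emph{not} contained in the metric $\delta$-neighborhood of $P_\lambda=\bigcap_x H(x)$: near a vertex where $d$ faces meet at angle $\alpha$, the outer polytope overshoots $P_\lambda$ by order $\delta/\alpha$, so $d_H(I_\eps^\lambda,J_\eps^\lambda)$ is controlled by $\delta/\alpha_{\min}$ rather than $\delta$. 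Since the number of faces is $\Theta(\log^2\lambda)$ and grows, $\alpha_{\min}\to 0$ in probability, so your claimed uniform bound $d_H(\lambda I_\eps^\lambda,\lambda J_\eps^\lambda)=O(\lambda\eps^2)$ cannot hold with a deterministic constant; some quantitative lower bound on $\alpha_{\min}$, holding with high probability, is indispensable. This is precisely the content of the second half of the paper's proof (Figure 4): with high probability no normal direction $Y_j$ lies within angular distance $\lambda^{-1/4}$ of the great hypersphere spanned by any $d-1$ of the other normals (a union bound over $O(\log^{2(d-1)}\lambda)$ tuples, each event having probability $O(\lambda^{-1/4})$), which caps the blow-up factor at $O(\lambda^{1/4})$ and is the origin of the exponent in the statement. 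If you insert that lemma, your coupling actually yields the stronger bound $O(\lambda^{1/4}\cdot\lambda\eps^2)=o(\lambda^{-1/4})$ after rescaling; but as written the decisive estimate is missing.
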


\begin{proof} We will give an explicit coupling of the intersection and tessellation models on the same probability space where the two models agree with high probability. The coupling works as follows. We will build the intersection model from the Poisson point process $\mathcal{X}$, so the random sets $J^\lambda$ and $J^\lambda_\eps$ are defined above (see (\ref{jeps_def})), and then construct $I^\lambda$ and $I^\lambda_{\eps}$ as functions of $\mathcal{X}$. Fix any $\lambda > 0$, and partition $\mathcal{X}^\eps = \cX^\eps_{\lambda}$ into two sets
\begin{equation}
\mathcal{X}^\eps = \{x \in \mathcal{X}^\eps: ||x|| > 1 \} \cup \{x \in \mathcal{X}^\eps: ||x|| < 1  \} := \mathcal{X}^{\eps, +} \cup \mathcal{X}^{\eps, -}.
\end{equation}
\noindent We list the points of $\mathcal{X}$ lying in $\mathcal{X}^{\eps, +}$ and $\mathcal{X}^{\eps, -}$ as follows:
\begin{equation}
\mathcal{X}^{\eps, +} = \{X_1, X_2, \ldots, X_{N'}\}, \mathcal{X}^{\eps, -} = \{X_{N'+1}, \ldots, X_N\}.
\end{equation}
\noindent Write $X_i = S_i\Theta_i$ for $i = 1, 2, \ldots, N'$, where $S_i >1$ and $\Theta_i \in \mathbb{S}$, and set
\begin{equation}
C_i' = (S_i - 2) \Theta_i \text{ for } i = 1, 2, \ldots, N',
\end{equation}
\noindent while $C_i' = X_i$ for $i = N'+1, \ldots, N$. Generate further points $C_i', i = N+1, \ldots, Z$ from a Poisson process of intensity $\lambda$ in $\{x \in \mathbb{B}: ||x|| < 1-\eps\}$.
Then we set
\begin{equation}
(I^{\lambda})' = \bigcap_{i=1}^Z (C_i' + \mathbb{B}) \text{ and } (I_{\eps}^\lambda)' = \bigcap_{i=1}^N (C_i' + \mathbb{B}).
\end{equation}
\noindent Note that the set $(I_\eps^{\lambda})'$ doesn't have exactly the same distribution as $I_\eps^\lambda$: because ${\rm Sh}^+(\eps)$ has more volume than ${\rm Sh}^-(\eps)$, $(I_\eps^\lambda)'$ has slightly more points than $I_\eps^\lambda$. Also, since the shift map $s \theta \to (s-2)\theta$ is not an isometry, those points are no longer uniformly distributed over the inner shell. It remains to show that error between these two point processes is small. The two distributions in question can be described as follows:

\begin{enumerate} \item $I^\lambda_\eps$ is formed by sampling a Poisson variable $N$ with mean $\mu = \lambda \omega_d (1-(1-\eps)^d)$, and placing $N$ points on ${\rm Sh}^-(\eps)$ with iid uniform angles in $\mathbb{S}$ and iid radii $R$ distributed as

\begin{equation}
\p(R \leq 1-w) = H(w) = \frac{(1-w)^d-(1-\eps)^d}{1-(1-\eps)^d}, w \in (0, \eps).
\end{equation}

\item $(I_\eps^\lambda)'$ is formed by sampling a Poisson variable $N'$ with mean $\mu' = \frac{\lambda}{2} \omega_d ((1+\eps)^d - (1-\eps)^d)$, and placing $N'$ points on ${\rm Sh}^+(\eps)$ with iid uniform angles in $\mathbb{S}$ and iid radii $R'$ distributed as

\begin{equation}
\p(R' \leq 1-w) = H'(w) = 1 - \frac{(1+w)^d - (1-w)^d}{(1+\eps)^d - (1-\eps)^d}, w \in (0, \eps).
\end{equation}

\end{enumerate}

The difference between the means is

\begin{equation}
\mu' - \mu = \lambda \omega_d\left( \binom{d}{2} \eps^2 + O(\eps^3) \right) = O(\lambda \eps^2). 
\end{equation}

By Proposition \ref{Poisson_TV}, ${\rm TV}(N, N') \leq c \lambda \eps^2$ for some $c > 0$. To couple the points of $I_\eps^\lambda$ and $(I_\eps^\lambda)'$, first note that the random angles can be coupled directly, so it suffices to couple the radii. We rely on an idea from optimal transport, namely the optimal coupling for the Wasserstein-1 distance, rather than total variation. The optimal coupling with respect to the $L^1$ distance is given by the transport map $H' \circ H^{-1}$: that is, given $R$, the random variable $H'(H^{-1}(R))$ on the same probability space is distributed as $R'$, and the average $L^1$ distance between $R$ and $R'$ is minimized for this coupling; see for instance Section 2.1 of~\cite{santa}. Explicit computations with series expansions show that for $u \in (0, \eps)$,

\begin{equation}
u- H'\circ H^{-1}(u) = O(\eps^2).
\end{equation}

\noindent Since $\eps^2 \ll \lambda^{-3/2}$ as $\lambda \to \infty$, for $\lambda$ sufficiently large there exists a coupling between $R$ and $R'$ such that

\begin{equation}\label{shifty}
|R - R'| \leq \lambda^{-3/2} \text{ a.s. as } \lambda \to \infty.
\end{equation}
%
%
%
  \begin{center}
	\begin{figure}
		\hspace{40pt} \includegraphics[scale=.35]{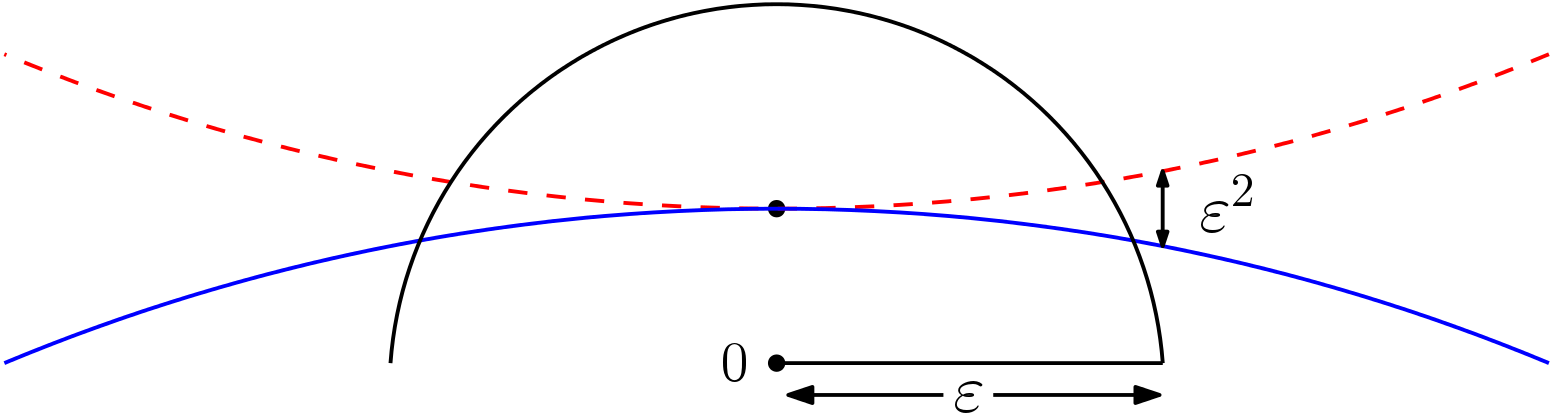}
		\caption{On the event that the intersection model is contained in $\mathbb{B}_\eps$, the difference between $(I^\lambda)'$ and $J^\lambda$ is that some of the copies of $\mathbb{B}$ in the former comprise a concave piece of the boundary (red/dashed), while those in the latter form a convex boundary piece (blue/solid). The difference is small in Hausdorff distance, because the boundary of the ball is differentiable.}
	\end{figure}
  \end{center}
\begin{center}
	\begin{figure}
		\hspace{2in} \includegraphics[scale=.35]{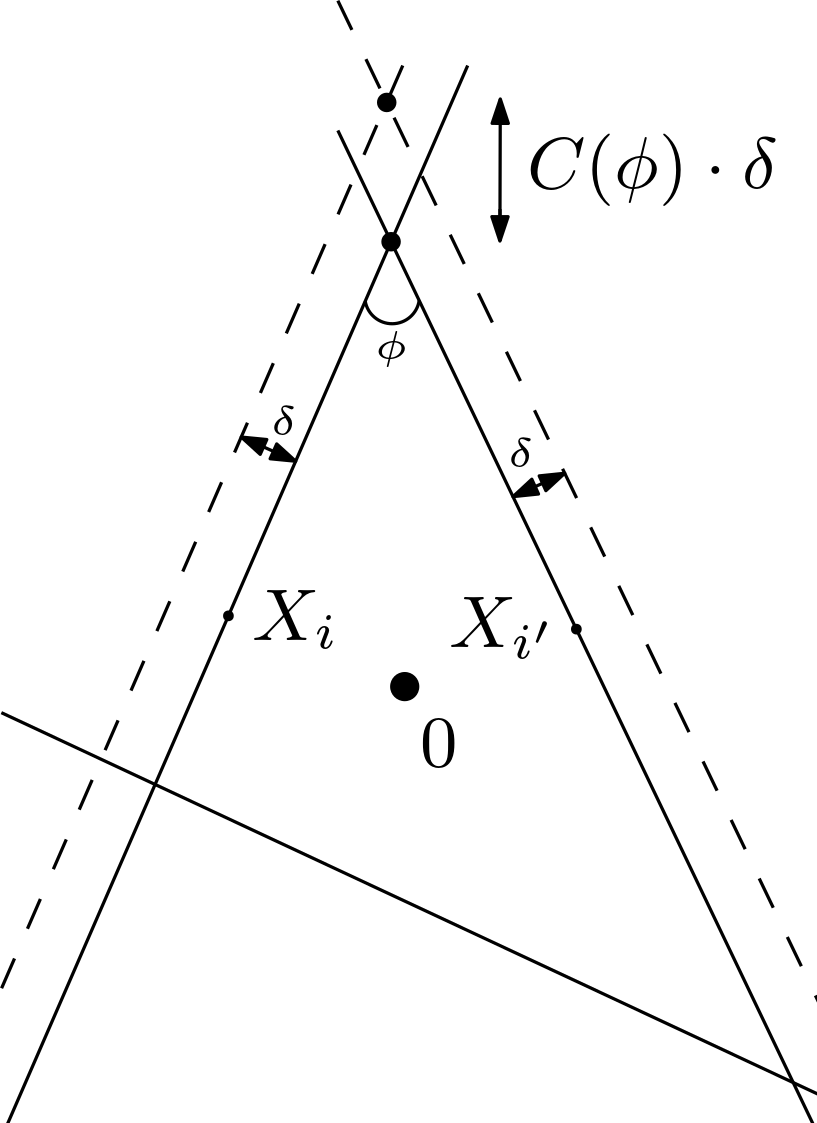}
		\caption{As long as the angles $\phi$ between intersecting hyperplanes are not too small, their $d$-fold intersections move by $O(\delta)$ if the hyperplanes are shifted by at most $\delta$.}
	\end{figure}
  \end{center}
Let us summarize the proof up to this point, ignoring $\eps$ and $\lambda$ for simplicity. We start with the points $X_i$ close to $\partial\mathbb{B}$, which define the tessellation model $J$, a small set near the origin, whose boundary is composed of convex parts (corresponding to $X_i$ lying just inside the unit ball $\mathbb{B}$) and concave parts (coming from $X_i$ lying just outside $\mathbb{B}$). We then shift the $X_i$ lying outside $\mathbb{B}$ to diametrically opposite points $C'_i$; this ``flips" the corresponding spheres, which are now centered at the $C'_i$, so that the resulting set $I'$ is now convex, being composed entirely of convex parts. This last step does not generate a random copy of $I$; we've introduced a slight distortion, in that the density of the points $C_i'$ is not quite uniform. We thus have to shift the $C_i'$ slightly in accordance with (\ref{shifty}), so that the corresponding parts of the boundary of $I'$ also shift a little, until we finally generate a truly random instance of the intersection model $I$.

Facts~\ref{trig_lemma} and (\ref{shifty}) together show that, in carrying out these two steps, the parts of the boundaries of the sets $I$ and $J$ corresponding to a fixed $X_i$ are close (within Hausdorff distance $\lambda^{-3/2}$, before scaling). It remains to show that the sets $I$ and $J$ are themselves close. For this, we must exclude certain undesirable geometric configurations arising when different boundary pieces intersect. Namely, if the angle at which some set of hyperplanes meet is too small, shifting them could move the common intersection by much more than $\delta$. (See Figure 4 for a diagram in dimension $d = 2$.) To complete the proof, we need to show that the probability that these shifts move the boundary of $J$ by more than $\lambda^{1/4}\delta=\lambda^{-5/4}$ tends to zero.  

By Fact~\ref{trig_lemma}, we may replace the spheres $X_i+\partial\mathbb{B}$ by hyperplanes $Y_i\cdot z=\alpha_i\eps$, where \linebreak $Y_i=(||X_i||_2 - 1) \cdot \frac{X_i}{||X_i||_2}$ and $0<\alpha_i< 1$. To further simplify matters, we may upper bound the change in Hausdorff distance by assuming that all $\alpha_i=1$, and that the total number of hyperplanes comprising the intersection is $O(\log^2\lambda)$ (which holds with high probability since the number of hyperplanes is Poisson with mean $C_d\log^2\lambda$). Temporarily rescaling so that $\eps=1$, our task is to show that shifting any $d$ of the hyperplanes $Y_i\cdot z=1$ to $Y_i\cdot z=1+\delta$ moves their intersection by no more than $O(\lambda^{1/4}\delta)$, with high probability. This will happen unless some point $Y_j$ lies within angular distance $\lambda^{-1/4}$ of a great hypersphere defined by some $d-1$ other points $Y_{j_1},\ldots,Y_{j_{d-1}}$. There are $O(\log^{2(d-1)}\lambda)$ such choices for $Y_{j_1},\ldots,Y_{j_{d-1}}$, and for a fixed such choice, the probability that $Y_j$ lies within $\lambda^{-1/4}$ of their great hypersphere is $C_d\lambda^{-1/4}$. Putting the pieces together, the sets $I$ and $J$ are within Hausdorff distance $\lambda^{-5/4}$ with high probability, and so
\begin{equation}
\p(d_H(\lambda I_\eps^\lambda, \lambda J_\eps^\lambda) > C\lambda^{-1/4}) \to 0,
\end{equation}
as desired.

\end{proof}

\noindent The final ingredient is a union bound.

\begin{proof}[Proof of Theorem \ref{intersection_conv}] Combining Propositions \ref{boundary_restrict} and \ref{coupling_prop} gives

\begin{equation} \p(d_H(\lambda J^\lambda, \lambda I^\lambda) > C \lambda \eps^2) \leq \p(J_{2\eps}^\lambda \neq J^\lambda) + \p(I_{2\eps}^\lambda \neq I^\lambda) + \p(d_H(\lambda J_{2\eps}^\lambda, \lambda I_{2\eps}^\lambda) > C\lambda^{-1/4}) \to 0. \end{equation}

\end{proof}

This method could likely be applied to a wider class of intersection models: namely, the ball $\mathbb{B}$ could be replaced by another set satisfying some curvature condition. The difficulty in such a generalization would lie in finding a canonical way to construct a coupling between the intersection model and another model known to have the Crofton cell of a tessellation model as a scaling limit.

\section{The expected volume of $I^{\lambda}$} \label{sec:ccell} 

We first sketch Goudsmit's classical calculation for the second moment of the area $A$ of a typical cell in a Poisson line tessellation ${\sf T}$, when $d=2$.

First we recall some basic facts about ${\sf T}$, which can be defined as follows. Let $\mathcal{P}$ be a unit intensity Poisson process on $[0,2\pi)\times\R^+$. We associate the point
$(\theta,\rho)\in\mathcal{P}$ with the line $x\cos\theta+y\sin\theta=\rho$. This yields a translation and rotation invariant measure on families of lines in the plane; ${\sf T}$ is a random instance of this measure. It is not hard to show that the intersections of ${\sf T}$ and another random line form a Poisson process of rate 2, and also that ${\sf T}$ has $\pi$ intersections, and thus $\pi$ cells, per unit area.

Goudsmit in 1945~\cite{Goud} considered tessellations on the surface of a sphere, but we can rephrase his argument in terms of a realization of ${\sf T}$ over a large circular region $R$ of area $N$. We choose two points  $x,y\in R$ uniformly at random and ask for the probability $q_N$ that they lie in the same cell of ${\sf T}$. If $f(t)$ denotes the density function for the area $A$ of a typical cell, then, as $N\to\infty$,
\[
Nq_N\to\frac{\int_0^{\infty} t^2f(t)\,dt}{\int_0^{\infty}tf(t)\,dt}=\frac{\E(A^2)}{\E(A)},
\]
since $tf(t)$ is the density function for the area of a cell containing a given point. On the other hand, $x$ and $y$ lie in the same cell of ${\sf T}$ if and only if the line segment $l_{xy}$ joining them does not intersect ${\sf T}$. As noted above, the number of intersections between $l_{xy}$ and ${\sf T}$ is Poisson with rate 2, so there are no such intersections with probability $e^{-2l}$. Consequently,
\[
Nq_N\to \int_0^{\infty}2\pi le^{-2l}\,dl=\frac{\pi}{2}.
\]
Combining these two expressions yields the formula $\E(A^2)=\frac{\pi}{2}\E(A)=\frac12$, which we may rewrite as
\[
\frac{\E(A^2)}{\E(A)^2}=\frac{\pi^2}{2}.
\]

Goudsmit's argument generalizes to higher dimensions. Write $V_d$ for the volume of a typical cell in a Poisson hyperplane tessellation ${\sf T}_d$ in $\R^d$. This time we will choose a different normalization for the measure $\mu_{d-1}$ on the space $A(d,d-1)$ of affine hyperplanes in $R^d$, in which the measure of hyperplanes meeting the unit ball is 2, i.e., the diameter of the unit ball. Consequently, in the corresponding tessellation ${\sf T}_d$, the expected number of hyperplanes $H$ meeting the unit ball is 2. With $N$ now representing the volume of a large spherical region $R$, and $q_N$ as before, we once again have
\[
Nq_N\to\frac{\int_0^{\infty} t^2f(t)\,dt}{\int_0^{\infty}tf(t)\,dt}=\frac{\E(V_d^2)}{\E(V_d)}
\]
as $N\to\infty$. The second part of the calculation requires a slight modification; we now wish to estimate the number $I_d$ of intersections of a line $L$ of length $l$ with ${\sf T}_d$. Once again, $I_d$ has a Poisson distribution, with mean $c_d$ given by Crofton's formula~\cite{SW} (page 172), namely
\[
\frac{1}{l}\int_{A(d,d-1)}|L\cap H|\mu_{d-1}(dH)=\frac{2\omega_{d-1}}{d\omega_d}:=c_d,
\]
where
\[
\omega_d=\frac{\pi^{d/2}}{\Gamma(d/2+1)}
\]
is the volume of the unit ball. Continuing the argument as before, and recalling that $S_d=d\omega_d$ is the surface area of the unit ball, we have
\[
Nq_N\to \int_0^{\infty}S_dl^{d-1}e^{-c_dl}\,dl=\frac{d!}{c_d^d}\omega_d.
\]
As before, this yields
\[
\frac{\E(V_d^2)}{\E(V_d)}=\frac{d!}{c^d_d}\omega_d,
\]
and, since (with this normalization) we have
\[
\E(V_d)=\left(\frac{2}{c_d}\right)^d\frac{1}{\omega_d}
\]
(see~\cite{Math}, pages 156 and 179), we conclude that
\[
\frac{\E(V_d^2)}{\E(V_d)^2}=\frac{d!}{2^d}\omega_d^2,
\]
in agreement with~\cite{Math} (page 179).

\subsection{Intersection model}

In this section, we apply the above results to give an alternative proof of Proposition \ref{volume_conv}. This is essentially contained in Theorem \ref{intersection_conv} and the formulas above; Theorem \ref{intersection_conv} relates the volumes of $I^{\lambda}$ and $D^1_0$, and the above analysis applies to $D^1_0$. The scaling in Theorem \ref{intersection_conv} relies on the scaling in Theorem \ref{CMP}, which was briefly sketched in the introduction. For completeness, we now explain the scaling in more detail, in the context of Theorem \ref{intersection_conv}.

Suppose then that we know that the intersection $I^\lambda = I_d^{\lambda}$ converges to the Crofton cell of a suitably-scaled Poisson hyperplane process, and we wish to determine the scaling.

First, consider the Poisson hyperplane model normalized, as above, so that the expected number of hyperplanes meeting the unit ball is 2. Write $B_{\eps}=\{x:||x|| < \eps \}$, so that the expected number of hyperplanes meeting $B_{\eps}$ is $2\eps$. For this process, the expected volume $\E(V^0_d)$ of the Crofton cell $V^0_d$ is related to the volume of the typical cell $V_d$ as follows:
\[
\E(V^0_d)=\frac{\E(V_d^2)}{\E(V_d)}=\frac{d!}{c^d_d}\omega_d=\frac{d!d^d\omega_d^{d+1}}{2^d\omega_{d-1}^d}.
\]
The last two equalities come from the preceding section. The first equality is proved by calculating the probability that a point chosen uniformly at random in a large region lies in the same cell as the origin (as in the first part of Goudsmit's calculation).

Second, let us choose $\eps=\eps(\lambda)\to 0$ so that the expected number of spheres in the intersection model that meet $B_{\eps}$ tends to infinity. Then the intersection model inside $B_{\eps}$ is well-approximated by a Poisson hyperplane tessellation. The number of intersecting spheres meeting $B_{\eps}$ is the number of points of the underlying Poisson process in the shell
${\rm Sh}^-(\eps) = \{x \in \mathbb{R}^d: ||x|| \in (1-\eps, 1)\}$. This number is asymptotically $\eps\lambda d\omega_d$, i.e., $\lambda d\omega_d/2$ times the ``normalized" value above. Consequently,
as $\lambda\to\infty$,
\[
\E(|I^{\lambda}_d|)=(1+o(1))\E(V^0_d)\left(\frac{2}{\lambda d\omega_d}\right)^d=(1+o(1))\frac{d!\omega_d}{\lambda^d\omega_{d-1}^d},
\]
so that
\[
\lim_{\lambda\to\infty}\lambda^d\E(|I^{\lambda}_d|)\to \frac{d!\omega_d}{\omega_{d-1}^d},
\]
in agreement with Proposition \ref{volume_conv}.

\section{Further questions} \label{sec:conclusion} 

We close with some questions related to intersection processes that arose in the course of our research.

\begin{question} What class of intersection models has the Crofton cell of the hyperplane tessellation model $D_0^1$ as a scaling limit? \end{question}

\noindent Some kind of curvature condition is likely necessary: for example, consider the cone intersection model $I^{\rho, \lambda}_{\text{Cone}(\vec{e_1}, \beta)}$ (discussed at the end of section \ref{general_intmodel}). Just like the hyperplane model, the cone model is a polyhedron. However, when $\beta \neq \frac{\pi}{2}$, each cone whose apex lies at the boundary of the intersection will contribute an additional vertex. Back of the envelope calculations show that these vertices remain with positive probability in the limit $\lambda \to \infty$, which strongly suggests that the cone model does not have $D_0^1$ as a scaling limit.

\begin{question} Proposition \ref{general_radius_distrib} shows that the radius $R^{\mu, \lambda}_A$ converges to an exponential distribution under some rescaling. How does the rescaling relate to the geometry of $A$? How does it relate to the behavior of $\mu$? \end{question}

For example, when $A = \mathbb{B}$, only the behavior of the density of $\mu$ near 0 matters in the limit.

\begin{question} Let $S_\lambda$ denote the number of faces of the intersection model $I^\lambda$. Fact \ref{sphere_partition} suggests that $S_\lambda$ is uniformly tight -- i.e. that for any $\delta > 0$ there exists $t > 0$ such that for any $\lambda > 0, \p(S_\lambda > t) < \delta$ -- which would imply the existence of a limiting stationary distribution for $S$.
Can the limiting distribution and transition probabilities for the stochastic process $(S_\lambda)_{\lambda > 0}$ be computed or approximated? \end{question}

A similar question has been answered for the typical Poisson-Voronoi cell and for the Crofton cell of a hyperplane tessellation, in a different limiting context \cite{Calkasides,CS}. The intersection construction of the Crofton cell $D_0^1$ seems ideal to study this question, because it has a natural renewal quality as $\lambda$ increases.

\section{Appendix} \label{sec:appendix} 

\subsection{Appendix A} \label{appA}

In this appendix we recall the definition and some basic properties of Poisson point processes, which are used throughout this article.

\begin{definition} Let $\Omega \subset \mathbb{R}^d$ be a Borel set, and $\mu$ a finite Borel measure on $\Omega$. The \textbf{Poisson point process (PPP)} on $\Omega$ with intensity $\mu$, denoted $\cX$, is the finite set consisting of Poisson$(\mu(\Omega))$ many independent points sampled from the probability measure $\frac{1}{\mu(\Omega)} \mu$. \end{definition}

This is a natural, intuitive definition; for a proper introduction to the theory of Poisson point processes, see~\cite{king}.

\begin{fact} Suppose $\cX$ is a PPP on a Borel set $\Omega \subset \mathbb{R}^d$ with intensity meausre $\mu$. \begin{itemize} \item[a.] Let $A \subset \Omega$ be any Borel set. Then

\begin{equation} |A \cap \cX| \sim \text{\normalfont{Poisson}}(\mu(A)). \end{equation}

\item[b.]  If $A, B \subset \Omega$ are disjoint Borel sets, then $\cX \cap A$ and $\cX \cap B$ are independent.
\end{itemize} \end{fact}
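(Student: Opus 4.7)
The plan is to verify both parts directly from the given definition, using the standard fact that a Poisson-compounded binomial (respectively multinomial) is Poisson (respectively a product of independent Poissons). Throughout, let $N=|\cX|\sim\text{Poisson}(\mu(\Omega))$ and let $Y_1,Y_2,\ldots$ be iid samples from the probability measure $\mu/\mu(\Omega)$, so that $\cX=\{Y_1,\ldots,Y_N\}$.

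For part (a), set $p=\mu(A)/\mu(\Omega)$. Conditional on $N=n$, each $Y_i$ lies in $A$ independently with probability $p$, so $|A\cap\cX|\mid N\sim\text{Binomial}(N,p)$. The cleanest finish is via the probability generating function:
\[
\E\!\left[z^{|A\cap\cX|}\right]=\E\!\left[(1-p+pz)^{N}\right]=\exp\!\left(\mu(\Omega)\bigl((1-p+pz)-1\bigr)\right)=\exp\!\left(\mu(A)(z-1)\right),
\]
which is the generating function of $\text{Poisson}(\mu(A))$. (One could also sum the series $\sum_{n\ge k}\frac{e^{-\mu(\Omega)}\mu(\Omega)^n}{n!}\binom{n}{k}p^k(1-p)^{n-k}$ directly.)

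For part (b), suppose $A,B\subset\Omega$ are disjoint, and write $p_A=\mu(A)/\mu(\Omega)$, $p_B=\mu(B)/\mu(\Omega)$. By disjointness, conditional on $N=n$ the triple $(|A\cap\cX|,|B\cap\cX|,n-|A\cap\cX|-|B\cap\cX|)$ is multinomial with probabilities $(p_A,p_B,1-p_A-p_B)$. Computing the joint generating function,
\[
\E\!\left[z_1^{|A\cap\cX|}z_2^{|B\cap\cX|}\right]=\E\!\left[(p_A z_1+p_B z_2+1-p_A-p_B)^{N}\right]=\exp\!\left(\mu(A)(z_1-1)+\mu(B)(z_2-1)\right),
\]
which factors as the product of the generating functions of $\text{Poisson}(\mu(A))$ and $\text{Poisson}(\mu(B))$. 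Uniqueness of the joint generating function therefore yields independence, and simultaneously reconfirms the marginal laws from part (a).

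There is no real obstacle here; the only point of care is that the multinomial decomposition in part (b) requires $A$ and $B$ to be disjoint, which is exactly the hypothesis. (The argument extends verbatim to any finite family of pairwise disjoint Borel sets, giving the standard ``complete independence'' property of Poisson processes.)
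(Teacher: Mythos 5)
The paper does not actually prove this fact: it is stated in Appendix A as background, with the reader referred to Kingman's book, so there is no in-paper argument to compare against. Your computation is the standard one that follows directly from the paper's ``finite'' definition of a PPP (a Poisson number of iid draws from $\mu/\mu(\Omega)$): the Poisson-mixed binomial generating function for part (a), and the Poisson-mixed multinomial generating function for part (b). Both calculations are correct, and the generating-function route is clean and self-contained; this is exactly the proof the authors are implicitly relying on.

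One point deserves attention. Part (b) asserts that the random \emph{sets} $\cX\cap A$ and $\cX\cap B$ are independent, whereas your argument establishes independence only of the counting variables $|\cX\cap A|$ and $|\cX\cap B|$. That is strictly weaker: two point processes can have independent cardinalities without being independent as processes. The fix is short but should be said: condition on $N=n$ and on the indicator pattern recording which of $Y_1,\dots,Y_n$ land in $A$, in $B$, or elsewhere; by the iid structure, given this pattern the locations are conditionally independent, with those in $A$ distributed as $\mu|_A/\mu(A)$ and those in $B$ as $\mu|_B/\mu(B)$. Combining this with your multinomial/generating-function computation shows that the full restricted processes $\cX\cap A$ and $\cX\cap B$ are independent PPPs with intensities $\mu|_A$ and $\mu|_B$ (which also re-derives part (a) as a byproduct). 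With that one sentence added, your proof is complete and matches the standard treatment.
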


An important special case of Fact $8.2a$ is that the probability that a given region $A$ contains no points of $\cX$ is given by:

\begin{equation} \p(A \cap \cX = \emptyset) = \exp(- \mu(A)). \end{equation}

\subsection{Appendix B} \label{appB}

In this appendix we note a formula for finding the volume of a star-shaped domain in $\mathbb{R}^m$ by doing an integration over the $(m-1)$ dimensional sphere.

\begin{fact}
Suppose
$g: \mathbb{S}^{m-1}\to\R^+$ is continuous, and let $A\subset \R^m$ be the star-shaped open set defined by
\begin{equation*}
A = \{x\in\R^m: |x|<g(x/|x|)\}.
\end{equation*}
Then $\text{Vol}(A) = \frac{1}{m}\int_{\mathbb{S}^{m-1}}g^m\,dS$,
where $dS$ is the $(m-1)$-dimensional surface area form on $\mathbb{S}^{m-1}$.
\end{fact}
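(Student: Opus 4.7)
The plan is to use spherical coordinates on $\mathbb{R}^m$ and reduce the volume integral to an iterated integral over the sphere $\mathbb{S}^{m-1}$ and the radial direction $[0,\infty)$. Concretely, every point $x \in \mathbb{R}^m \setminus \{0\}$ can be written uniquely as $x = r\theta$, where $r = |x| > 0$ and $\theta = x/|x| \in \mathbb{S}^{m-1}$. Under this change of variables, Lebesgue measure on $\mathbb{R}^m$ decomposes as $dx = r^{m-1}\,dr\,dS(\theta)$, where $dS$ is the standard $(m-1)$-dimensional surface area measure on $\mathbb{S}^{m-1}$. This is the one nontrivial input, and I would simply invoke it as a standard fact from multivariable calculus.

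With that decomposition in hand, the proof is a direct calculation. First, I would observe that by the definition of $A$, a point $x = r\theta$ lies in $A$ if and only if $r < g(\theta)$. Hence the indicator $\mathbf{1}_A(r\theta)$ factors as $\mathbf{1}_{\{r < g(\theta)\}}$. Next I would apply Tonelli's theorem (justified because the integrand is nonnegative and measurable, and $g$ is continuous on a compact set hence bounded) to write
\begin{equation*}
\mathrm{Vol}(A) \;=\; \int_{\mathbb{R}^m} \mathbf{1}_A(x)\, dx \;=\; \int_{\mathbb{S}^{m-1}} \int_0^{\infty} \mathbf{1}_{\{r < g(\theta)\}}\, r^{m-1}\, dr\, dS(\theta).
\end{equation*}

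Finally, the inner integral evaluates explicitly as $\int_0^{g(\theta)} r^{m-1}\, dr = g(\theta)^m/m$, yielding $\mathrm{Vol}(A) = \frac{1}{m}\int_{\mathbb{S}^{m-1}} g(\theta)^m\, dS(\theta)$, as claimed. There is no real obstacle here: the statement is a textbook consequence of integration in spherical coordinates, and the continuity and positivity of $g$ ensure that $A$ is open and bounded (since $g$ is bounded on the compact sphere), so all integrals are finite and Tonelli applies without any measurability concerns. The only step that deserves care is the derivation (or citation) of the radial-angular decomposition of Lebesgue measure, which I would simply quote.
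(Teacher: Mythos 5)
Your proof is correct and follows essentially the same route as the paper's: both integrate the indicator of $A$ in spherical coordinates via the decomposition $dx = r^{m-1}\,dr\,dS(\theta)$ (the paper cites Folland's Theorem 2.49 for this) and then evaluate the inner radial integral to get $g(\theta)^m/m$. No gaps.
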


\begin{proof} The formula comes from integrating in spherical coordinates. Applying Folland's theorem 2.49~\cite{Folland} to the function $f(x) = 1\{x \in A\}$ gives

\begin{align} \text{Vol}(A) &= \int_{\mathbb{R}^d} f(x) \, dx \\
& = \int_{S^{m-1}} \int_{0}^{\infty} f(rx') r^{m-1} \, dr \, d\sigma(x') \\
& = \int_{S^{m-1}} \int_{0}^{g(x')} r^{m-1} \, dr \, d\sigma(x') \\
& = \frac{1}{m} \int_{S^{m-1}} g(x')^m \, d\sigma(x').
\end{align}

\end{proof}


\begin{thebibliography}{99}

\bibitem{adell} Adell, J.A., Jodrá, P.,
Exact Kolmogorov and total variation distances between some familiar discrete distributions,
Journal of Inequalities and Applications 2006, 64307.

\bibitem{dimseg} G. Ambrus, P. Kevei and V. Vigh,
The diminishing segment process,
{\em Statistics and Probability Letters} {\bf 82} (2012), 191--195.

\bibitem{bill} P. Billingsley,
Probability and Measure, Third Edition,
Wiley Series in Probability and Mathematical Statistics, 1995.

\bibitem{Bor} K. B\"or\"oczky,
Finite Packing and Covering,
Cambridge University Press, 2004.

\bibitem{Calkasides} P. Calka,
An explicit expression for the distribution of the number of sides of the typical Poisson-Voronoi cell, {\em Advances in Applied Probability} {\bf 35.4} (2003), 863--870.

\bibitem{Calka} P. Calka,
Some classical problems in random geometry,
in: Stochastic Geometry, Lecture Notes in Mathematics {\bf 2237},
Springer, 1--43, 2019.

\bibitem{CMP} P. Calka, J. Michel and K. Paroux,
Refined convergence for the Boolean model,
{\em Advances in Applied Probability} {\bf 41} (2009), 940--957.

\bibitem{CS} P. Calka and T. Schreiber,
Limit theorems for the typical Poisson-Voronoi cell and the Crofton cell with a large inradius,
{\em The Annals of Probability} {\bf 33} (2005), 1625--1642.

\bibitem{ccanonne} C. Cannone, A short note on Poisson tail bounds, available at

{\tt http://www.cs.columbia.edu/$\sim$ccanonne/files/misc/2017-poissonconcentration.pdf}

\bibitem{ER} P. Erd\H os and A. R\'enyi,
On a classical problem of probability theory,
{\em Magyar Tudom\'anyos Akad\'emia Matematikai Kutat\'o Int\'ezet\'enek K\H ozlem\'enyei} {\bf 6} (1961), 215--220.

\bibitem{Folland} G. B. Folland,
Real Analysis,
John Wiley \& Sons, 1999.

\bibitem{Massimo} M. Franceschetti and R. Meester,
Random Networks for Communication,
Cambridge University Press, 2007.

\bibitem{Gil61} E. Gilbert,
Random plane networks,
{\em J. Soc. Indust. Appl. Math.} {\bf 9} (1961), 533--543.

\bibitem{Gil65} E. Gilbert,
The probability of covering a sphere with $N$ circular caps,
{\em Biometrika} {\bf 56} (1965), 323--330.

\bibitem{Goud} S. Goudsmit,
Random distribution of lines in a plane,
{\em Rev. Mod. Phys.} {\bf 17} (1945), 321--322.

\bibitem{Hal85a} P. Hall,
On the coverage of $k$-dimensional space by $k$-dimensional spheres,
{\em Annals of Probability} {\bf 13} (1985), 991--1002.

\bibitem{Hal85b} P. Hall,
On continuum percolation,
{\em Annals of Probability} {\bf 13} (1985), 1250--1266.

\bibitem{Hallbook} P. Hall,
Introduction to the Theory of Coverage Processes,
Wiley, New York, 1988.

\bibitem{Martin} M. Haenggi,
Stochastic Geometry for Wireless Networks,
Cambridge University Press, 2013.

\bibitem{Jan} S. Janson,
Random coverings in several dimensions,
{\em Acta Mathematica} {\bf 156} (1986), 83--118.

\bibitem{dimseg2} P. Kevei and V. Vigh,
On the diminishing process of B\'alint T\'oth,
{\em Trans. Amer. Math. Soc.} {\bf 368} (2016), 8823--8848.

\bibitem{king} J. F. C. Kingman,
Poisson Processes,
Clarendon Press, 1992.

\bibitem{Math} G. Matheron,
Random Sets and Integral Geometry,
Wiley, 1975.

\bibitem{MeRo} R. Meester and R. Roy,
Continuum Percolation,
Cambridge University Press, 1996.

\bibitem{MP} J. Michel and K. Paroux,
Local convergence of the Boolean shell model towards the thick
Poisson hyperplane process in the Euclidean space,
{\em Advances in Applied Probability} {\bf 35} (2003), 354--361.

\bibitem{Miles1} R. Miles,
Random polygons determined by random lines in a plane I,
{\em Proc. Nat. Acad. Sci. USA} {\bf 52} (1964), 901-907.

\bibitem{Miles2} R. Miles,
Random polygons determined by random lines in a plane II,
{\em Proc. Nat. Acad. Sci. USA} {\bf 52} (1964), 1157--1160.

\bibitem{Mol} I. Molchanov,
A limit theorem for scaled vacancies of the Boolean model,
{\em Stochastics and Stochastics Reports} {\bf 58} (1996), 45--65.

\bibitem{robbins} H. E. Robbins,
On the Measure of a Random Set,
{\em The Annals of Mathematical Statistics} {\bf 15} (1944), 70--74.

\bibitem{santa} F. Santambrogio,
Optimal Transport for Applied Mathematicians,
Birkhauser, 2015.

\bibitem{San} L. Santal\'o,
Integral Geometry and Geometric Probability,
Addison Wesley, 1976.

\bibitem{SW} R. Schneider and W. Weil,
Stochastic and Integral Geometry,
Springer, 2008.

\end{thebibliography}
\end{document}